\newtheorem{theorem}{Theorem}
\newtheorem{lemma}{Lemma}
\newtheorem{proposition}{Proposition}
\newtheorem{corollary}{Corollary}
\theoremstyle{definition}
\newtheorem{remark}{Remark}
\begin{document}
\title[Volterra type operators]%
{Volterra type operators on weighted Banach spaces of analytic functions}

\author{Qingze Lin}

\address{School of Applied Mathematics, Guangdong University of Technology, Guangzhou, Guangdong, 510520, P.~R.~China}\email{gdlqz@e.gzhu.edu.cn}

\begin{abstract}
Smith et al. recently gave the sufficient and necessary conditions for the boundedness of Volterra type operators on Banach spaces of bounded analytic functions when the symbol functions are univalent. In this paper, we give the complete characterizations of the conditions for the boundedness and compactness of Volterra type operators $T_g$ and their companion operators $S_g$ between weighted Banach spaces of analytic functions, which essentially generalize their works.
\end{abstract}
\keywords{Volterra type operator, boundedness, compactness, Banach space} \subjclass[2010]{47G10, 30H05}

\maketitle

\section{\bf Introduction}
Let $\mathbb{D}$ be the unit disk of the complex plane $\mathbb{C}$ and $H(\mathbb D)$ be the space consisting of all analytic functions on the unit disk. For any analytic function $g\in H(\mathbb D)$, the Volterra type operator $T_g$ is defined by
$$(T_gf)(z)=\int_0^z f(\omega)g'(\omega)d\omega,\quad z\in\mathbb D, f\in H(\mathbb D)\,.$$
Its companion operator $S_g$ is defined by
$$(S_gf)(z)=\int_0^z f'(\omega)g(\omega)d\omega,\quad z\in\mathbb D, f\in H(\mathbb D)\,.$$

The boundedness and compactness of these two operators on some spaces of analytic functions were extensively studied. Pommerenke \cite{P} firstly studied the boundedness of $T_g$ on Hardy-Hilbert space $H^2$. After his work, Aleman, Siskakis and Cima \cite{AC,AS} studied the boundedness and compactness of $T_g$ on Hardy space $H^p$, in which they showed that $T_g$ is bounded (compact) on $H^p,\ 0<p<\infty$, if and only if $g\in BMOA\ (g\in VMOA)$. For the related works, see \cite{LMN}. Here, $BMOA\text{ and }VMOA$ denote the spaces of $f\in H(\mathbb{D})$ with boundary values of bounded mean oscillation and vanishing mean oscillation, respectively. What's more, Aleman and Siskakis \cite{AS1} studied the boundedness and compactness of $T_g$ on Bergman spaces and Xiao \cite{XJ} studied $T_g$ and $S_g$ on $Q_p$ spaces.

Recently, Li et al. \cite{LLZ} studied $T_g$ and $S_g$ on analytic Morrey spaces. Lin et al. \cite{LIN} characterized the boundedness of $T_g$ and $S_g$ acting on the derivative Hardy spaces $S^p(1\leq p<\infty)$. Miihkinen \cite{SM} investigated the strict singularity of $T_g$ on Hardy space $H^p$ and showed that the strict singularity of $T_g$ coincides with its compactness on $H^p$. Mengestie \cite{TM} obtained a complete description of the boundedness and compactness of the product of Volterra type operators and composition operators on weighted Fock spaces. Furthermore, by applying the Carleson embedding theorem and the Littlewood-Paley formula, Constantin and Pel\'{a}ez \cite{CP} obtained the boundedness and compactness of $T_g$ on weighted Fock spaces and investigated the invariant subspaces of the Volterra operator $T_z$ on such spaces.

Now, for any $\alpha\geq0$, we define the weighted \mbox{Banach} spaces $H^{\infty}_\alpha$ of analytic functions given by
$$H^{\infty}_\alpha=\{f\in H(\mathbb D):\ \|f\|_{H^{\infty}_\alpha}:=\sup_{z\in \mathbb D}\{(1-|z|^2)^\alpha|f(z)|\}<\infty\}\,.$$
When $\alpha=0$, $H^{\infty}_0$ is the usual \mbox{Banach} space of bounded analytic functions. The norms and essential norms of several bounded linear operators on such spaces had been extensively studied. For instance, Montes-Rodr\'{\i}guez \cite{AM} obtained the essential norms of weighted composition operators on $H^{\infty}_\alpha$.

Although the necessary and sufficient conditions for $T_g: H^{\infty}_0\rightarrow H^{\infty}_0$ to be bounded are characterized recently by \mbox{Contreras}, \mbox{Pel\'{a}ez}, \mbox{Pommerenke} and R\"{a}tty\"{a} \cite{CPPR}, the conditions characterizing the symbol $g$ use the Cauchy transforms which are very difficult to verified. However, Anderson, Jovovic and Smith \cite{AJS} conjectured that $T_g: H^{\infty}_0\rightarrow H^{\infty}_0$ is bounded if and only if
$$\sup_{0\leq\theta<2\pi}\int_0^1|g'(re^{i\theta})|dr<+\infty\,.$$
While this condition of symbol $g$ is sufficient for the boundedness of $T_g: H^{\infty}_0\rightarrow H^{\infty}_0$, it was recently proved to be not necessary by Smith, Stolyarov and Volberg in \cite{SSV}, where a counterexample was given. However, when the symbol $g$ is univalent, this conjecture was proved to be affirmative in the same paper. Following their works, Eklund et. al. \cite{ELPSW} studied the boundedness and compactness of $T_g$ between $H_{\alpha}^\infty$ and $H^{\infty}_0$, where $0\leq\alpha<1$ and $g$ is univalent. It should be noticed that Basallote et al. \cite{BCHMP} characterized the boundedness, compactness and weak compactness of $T_g$ acting between different weighted Banach spaces of analytic functions with the quasi-normal weight, and what's more, they applied the characterization of compactness to analyze the behavior of semigroups of composition operators on such spaces.

In this paper, using the ideas from \cite{ELPSW,SSV}, we give the sufficient and necessary conditions for the boundedness and compactness of $T_g: H^{\infty}_\alpha\rightarrow H^{\infty}_\beta$ and $S_g: H^{\infty}_\alpha\rightarrow H^{\infty}_\beta$ when $0\leq\alpha,\beta$, which generalize their works.

\section{\bf The boundedness of Volterra type operators}
For any $0<\eta<\pi, 0<r<1$, and $0\leq \theta<2\pi$, denote by $\mathcal{B}(\Omega^{r}_{\eta,\theta})$ the set of all analytic functions $F$ defined in the open sector
$$\Omega^{r}_{\eta,\theta}:=\left\{z\in\mathbb{D}: 0<|z|<r,\ \theta-\frac{\eta}{2}<\arg{z}<\theta+\frac{\eta}{2}\right\}\,,$$
such that
$$|F'(z)|\leq\frac{C_F}{|z|},\quad\text{for } z\in \Omega^{r}_{\eta,\theta}\,.$$
Here, $C_F$ is a constant depending only on function $f$.

By the similar arguments from \cite{ELPSW,SSV}, we obtain the following two lemmas.
\begin{lemma}\label{le1}
Let $0<\gamma<\eta<\pi$, $0\leq \theta<2\pi$ and $\varepsilon>0$. Then there exists a positive number $\delta(\varepsilon)$ such that for any $F\in \mathcal{B}(\Omega^{1/2}_{\gamma,\theta})$, there is a real harmonic function $u$ defined in $\Omega^{1}_{\eta,\theta}$ such that:

(1) $\left|\text{Re}(F(xe^{i\theta}))-u(xe^{i\theta})\right|\leq\varepsilon$,\quad for $x\in (0,\delta(\varepsilon)]$;

(2) $|\widetilde{u}(z)|\leq C(\varepsilon,\gamma,\eta,C_F)<\infty$,\quad for $z\in\Omega^{1}_{\eta,\theta}$, where $\widetilde{u}$ is the conjugate of $u$ such that $\widetilde{u}\left(\frac{1}{2}e^{i\theta}\right)=0$.
\end{lemma}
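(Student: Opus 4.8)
The plan is to transfer the problem to a half-strip by the conformal change of variable $w=\log z$, under which the derivative bound becomes a Lipschitz bound and the construction of a conjugate-bounded harmonic function becomes transparent. Writing $w=s+it$ and $\Phi(w)=F(e^{w})$, the hypothesis $|F'(z)|\le C_F/|z|$ reads $|\Phi'(w)|=|F'(e^{w})|\,e^{s}\le C_F$, so $\Phi$ is $C_F$-Lipschitz on the narrow half-strip $\{s<\log\tfrac12,\ |t-\theta|<\gamma/2\}$; in particular $g(s):=\text{Re}\,\Phi(s+i\theta)=\text{Re}(F(e^{s}e^{i\theta}))$ is a $C_F$-Lipschitz function of $s$. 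Since harmonic conjugation commutes with the holomorphic map $w\mapsto e^{w}$, it suffices to produce a harmonic $U$ on the wider half-strip $\{s<0,\ |t-\theta|<\eta/2\}$ with $U(s,\theta)$ close to $g(s)$ on the central line and with bounded conjugate $\widetilde U$; then $u(z):=U(\log z)$ and $\widetilde u(z):=\widetilde U(\log z)$ are the required functions, and conditions (1)--(2) translate verbatim.

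For the building blocks I would fix an intermediate aperture $\eta<\eta'<\pi$ and, for each real $a$, let $\Theta_a$ be the bounded harmonic function on the strip $\{|t-\theta|<\eta'/2\}$ whose boundary values on both edges equal the Heaviside jump $\mathbf 1[s>a]$. Mapping this strip onto the upper half-plane by $\zeta=i\exp\!\big(\tfrac{\pi}{\eta'}(w-i\theta)\big)$, the two jump points land at $\zeta=\pm e^{\pi a/\eta'}=:\pm\rho$, so $\Theta_a$ is, up to a constant, the harmonic measure of the two outer rays and its conjugate equals $\tfrac1\pi\log\frac{|\zeta-\rho|}{|\zeta+\rho|}$. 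Two facts drive the argument: along the central line $\zeta$ is purely imaginary, so this conjugate vanishes there while $\Theta_a(s,\theta)$ increases smoothly from $0$ to $1$ as $s$ crosses $a$; and on the smaller strip $\{|t-\theta|<\eta/2\}$ the image $\zeta$ stays in a subsector of the half-plane bounded away from the real axis, whence the conjugate is bounded and decays exponentially, of order $\mathrm{sech}\big(\tfrac{\pi}{\eta'}(s-a)\big)$, in $|s-a|$.

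I would then let $\dots<\sigma_{-1}<\sigma_0<\sigma_1<\dots$ be the successive values of $s$ at which $g$ crosses the grid $\varepsilon\mathbb Z$, assign to each a sign $\epsilon_j=\pm1$ according to the direction of the crossing, and set $U=\varepsilon\sum_j\epsilon_j\Theta_{\sigma_j}$. Because $g$ is $C_F$-Lipschitz, consecutive crossings satisfy $\sigma_{j+1}-\sigma_j\ge \varepsilon/C_F$, so $U(s,\theta)$ is a smoothed staircase tracking $g$ to within $O(\varepsilon)$ on the central line once $s$ is below a threshold $\log\delta(\varepsilon)$ absorbing the start-up transient and the quantization error; this gives~(1). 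For~(2), the conjugate is $\widetilde U=\varepsilon\sum_j\epsilon_j\widetilde\Theta_{\sigma_j}$, and the exponential localization together with the separation $\sigma_{j+1}-\sigma_j\ge\varepsilon/C_F$ yields $|\widetilde U(w)|\lesssim \varepsilon\sum_{n\ge0}e^{-\frac{\pi}{\eta'}\,n\,\varepsilon/C_F}$, a convergent geometric series whose sum is a constant $C(\varepsilon,\gamma,\eta,C_F)$. The normalization $\widetilde u(\tfrac12 e^{i\theta})=0$ is automatic, since every $\widetilde\Theta_{\sigma_j}$ vanishes on the central ray.

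The main obstacle is precisely the uniform bound in~(2) on the \emph{open} sector. This is what forces the jump points of the $\Theta_{\sigma_j}$ to sit on $\partial\Omega^{1}_{\eta',\theta}$ with $\eta'>\eta$ (using $\eta<\pi$): each individual conjugate has a logarithmic singularity at its own jump point, so the jumps must be kept a fixed angular distance from $\Omega^{1}_{\eta,\theta}$ for the $\mathrm{sech}$-estimate to persist up to the boundary. The remaining delicate points are verifying the exponential localization of the elementary conjugate and the sign bookkeeping when $g$ is non-monotone and recrosses a level several times; both become routine once the half-plane representation above is in place.
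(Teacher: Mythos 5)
Your reduction to the half-strip, the analysis of the elementary blocks $\Theta_a$, and the geometric summation of their conjugates are all correct, and the general flavor (stopping-time staircase, jumps placed on the boundary of a strictly wider sector) is indeed that of \cite{SSV,ELPSW} --- which is all the paper itself offers, since it states the lemma with no proof beyond a citation. The genuine gap is in your verification of (1), and it sits exactly where the real difficulty of the lemma lives. Each $\Theta_{\sigma_j}$ passes from $0$ to $1$ along the central line over a length scale comparable to $\eta'/\pi$, a constant fixed by the aperture, whereas your jump points are separated by only $\varepsilon/C_F$, which is in general far smaller. The signed sum $\varepsilon\sum_j\epsilon_j\Theta_{\sigma_j}$ therefore behaves like the true staircase convolved with a kernel of width $\sim\eta'$, and at a local extremum of $g$ this flattens the profile by an amount of order $C_F\eta'$, not $\varepsilon$. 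Concretely, for a tent-shaped $g$ with slopes $\pm C_F$ peaking at $s_0$ (realizable within the hypotheses, e.g.\ $\Phi'(w)=-C_F\tanh((w-s_0)/\delta)$ with $\delta\asymp\gamma$), the up-crossings lie at $s_0-(m+\tfrac12)\varepsilon/C_F$ and the down-crossings at $s_0+(m+\tfrac12)\varepsilon/C_F$, and evaluating at $s_0$ gives $g(s_0)-U(s_0,\theta)=\varepsilon\sum_{m\ge0}\tfrac{4}{\pi}\arctan\bigl(e^{-\pi(m+1/2)\varepsilon/(C_F\eta')}\bigr)\ge c\,\eta' C_F$ for an absolute $c>0$. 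This deficit does not shrink with $\varepsilon$, so (1) fails as soon as $\varepsilon<c\,\eta' C_F$.

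This is not a defect of the estimate but of the construction: if $u+i\widetilde u$ is analytic on $\Omega^{1}_{\eta,\theta}$ with $|\widetilde u|\le C$, then Schwarz--Pick applied to $i(u+i\widetilde u)$ followed by a Cauchy estimate forces $|\partial_s^2 u(s,\theta)|\lesssim C/\eta^2$ in log coordinates, and a second-difference argument at the tip of the tent shows that any $\varepsilon$-approximation must have $C\gtrsim\eta^2\min\{C_F^2/\varepsilon,\;C_F/\gamma\}$. Your construction delivers $C=O(C_F\eta')$ uniformly in $\varepsilon$ --- a conjugate bound too good to be compatible with claim (1). So a single-scale staircase cannot prove the lemma; the argument in \cite{SSV} is a genuinely multi-scale construction, which is precisely why their approximation theorem was the main technical contribution of that paper. (A minor additional point: ``successive values at which $g$ crosses the grid'' must be replaced by the stopping times $\sigma_{j+1}=\sup\{s<\sigma_j:\ |g(s)-g(\sigma_j)|=\varepsilon\}$, since crossings of a single level may accumulate and destroy the separation $\varepsilon/C_F$; that repair, as you anticipated, is routine.)
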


\begin{lemma}\label{le2}
Let $0<\gamma<\eta<\pi$, $0\leq \theta<2\pi$. If $\psi_{\eta,\theta}:\Omega^{1}_{\eta,\theta}\rightarrow\mathbb{D}$ is a conformal map such that $\psi_{\eta,\theta}\left(\frac{1}{2}e^{i\theta}\right)=0$ and $\psi_{\eta,\theta}(0)=e^{i\theta}$,
then there exists a constant $C_1(\gamma,\eta)$ such that
$$\frac{|\psi'_{\eta,\theta}(z)|}{1-|\psi_{\eta,\theta}(z)|^2}\leq\frac{C_1(\gamma,\eta)}{|z|}$$
for all $z\in \Omega^{1/2}_{\gamma,\theta}$\,.
\end{lemma}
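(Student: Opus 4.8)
The plan is to prove a Koebe-type distortion estimate for the conformal map $\psi_{\eta,\theta}$ by reducing the sector geometry to a standard half-plane model and exploiting the normalization conditions at the two distinguished points $\frac{1}{2}e^{i\theta}$ and $0$. First I would observe that by rotational symmetry it suffices to treat $\theta=0$; the general case follows by pre-composing with the rotation $z\mapsto e^{-i\theta}z$, which maps $\Omega^1_{\eta,0}$ to $\Omega^1_{\eta,\theta}$ isometrically and does not affect the ratio on the left-hand side since $|z|$ is rotation-invariant. So I would reduce to estimating $\frac{|\psi'(z)|}{1-|\psi(z)|^2}$ for $z$ in the smaller sector $\Omega^{1/2}_{\gamma,0}$, where $\psi:\Omega^1_{\eta,0}\to\mathbb{D}$ is conformal with $\psi(\tfrac12)=0$ and $\psi(0)=1$.

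The key step is to recognize that the quantity $\frac{|\psi'(z)|}{1-|\psi(z)|^2}$ is exactly the density (up to a constant factor) of the pullback under $\psi$ of the hyperbolic metric on $\mathbb{D}$, so by the Schwarz--Pick lemma it is comparable to the intrinsic hyperbolic metric of the sector domain $\Omega^1_{\eta,0}$ itself. Concretely, since $\psi$ is a conformal equivalence, $\frac{|\psi'(z)|}{1-|\psi(z)|^2}=\lambda_{\Omega}(z)$, the hyperbolic density of $\Omega^1_{\eta,0}$ at $z$. The problem therefore becomes a purely geometric one: bound $\lambda_{\Omega}(z)$ by $C_1(\gamma,\eta)/|z|$ for $z\in\Omega^{1/2}_{\gamma,0}$. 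This removes all dependence on the particular normalization of $\psi$ and explains why the constant depends only on $\gamma$ and $\eta$.

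To estimate the hyperbolic density near the vertex, I would uniformize the sector explicitly. The power map $w\mapsto w^{\pi/\eta}$ (suitably rotated) carries the infinite sector of opening $\eta$ with vertex at $0$ onto a half-plane, and near the vertex the truncation at radius $1$ is negligible in the hyperbolic metric; the standard density on a half-plane $\{\mathrm{Im}\,\zeta>0\}$ is $1/(2\,\mathrm{Im}\,\zeta)$. Transporting this back through the power map and using the comparison of hyperbolic densities for nested domains (a larger domain has smaller density, via the monotonicity of the hyperbolic metric), I expect to obtain $\lambda_{\Omega}(z)\asymp \frac{1}{|z|\sin(\tfrac{\pi}{\eta}(\arg z))}$ type bounds. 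The crucial point is that on the strictly smaller sector $\Omega^{1/2}_{\gamma,0}$ with $\gamma<\eta$, the argument $\arg z$ stays uniformly bounded away from the boundary rays $\pm\eta/2$, so the angular factor $\sin(\cdot)$ is bounded below by a constant depending only on $\gamma$ and $\eta$, yielding the desired bound $C_1(\gamma,\eta)/|z|$.

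The main obstacle will be making the comparison between the truncated sector $\Omega^1_{\eta,0}$ and the idealized infinite sector rigorous and uniform. The truncation at $|z|<1$ introduces a second boundary arc, and one must verify that its influence on the hyperbolic density is controlled for $z$ in $\Omega^{1/2}_{\gamma,0}$, i.e.\ at distance at most $1/2$ from the vertex, where the arc at radius $1$ is comparatively far away. I would handle this by the domain-monotonicity of the hyperbolic metric: the truncated sector is contained in the infinite sector, so its density is at least that of the infinite sector, giving the matching lower-order control, while an inscribed-disk argument (each $z\in\Omega^{1/2}_{\gamma,0}$ contains a disk of radius comparable to $|z|$ inside $\Omega^1_{\eta,0}$, since $\gamma<\eta$ keeps $z$ away from the lateral edges) gives the upper bound $\lambda_{\Omega}(z)\le 1/\mathrm{dist}(z,\partial\Omega)\le C/|z|$. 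Combining the inscribed-disk upper bound with the explicit power-map computation for the constant is the step requiring the most care, but it is routine once the hyperbolic-metric reformulation is in place.
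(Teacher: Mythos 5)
Your argument is correct and is essentially the standard proof that the paper itself omits (Lemma~\ref{le2} is stated with only a pointer to \cite{ELPSW,SSV}): the identification of $|\psi'_{\eta,\theta}(z)|/(1-|\psi_{\eta,\theta}(z)|^2)$ with the hyperbolic density $\lambda_{\Omega}(z)$ of the sector (which also explains why the normalization at $\tfrac12 e^{i\theta}$ and $0$ is irrelevant), followed by the inscribed-disk estimate $\lambda_{\Omega}(z)\le 1/\operatorname{dist}(z,\partial\Omega)\le C(\gamma,\eta)/|z|$, valid because $\gamma<\eta$ keeps $\Omega^{1/2}_{\gamma,\theta}$ at distance $\gtrsim|z|$ from the lateral rays and the truncating arc. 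The only comment is that the power-map uniformization and the matching lower bound you discuss are superfluous for the stated one-sided inequality; the Schwarz--Pick/monotonicity step alone closes the proof.
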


Recall that the Bloch space $\mathcal{B}$ is defined by
$$\mathcal{B}=\{f\in H(\mathbb D):\ \|f\|_{\mathcal{B}}:=|f(0)|+\sup_{z\in \mathbb D}\{(1-|z|^2)|f'(z)|\}<\infty\}\,.$$

Now we are able to give the sufficient and necessary condition for the boundedness of Volterra type operators $T_g: H^{\infty}_\alpha\rightarrow H^{\infty}_\beta$\,.
\begin{theorem}\label{th1}
If $g\in H(\mathbb{D})$ such that $\log(g')\in \mathcal{B}$ and $0\leq\alpha,\beta$, then $T_g: H^{\infty}_\alpha\rightarrow H^{\infty}_\beta$ is bounded if and only if
$$\limsup_{t\rightarrow1^-}\sup_{0\leq\theta<2\pi}(1-t^2)^\beta\int_0^t\frac{|g'(re^{i\theta})|}{(1-r^2)^\alpha}dr<+\infty\,.$$
\end{theorem}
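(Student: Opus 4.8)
The plan is to prove the two implications separately: sufficiency is a direct estimate along radii, while necessity requires constructing sharp test functions out of Lemmas \ref{le1} and \ref{le2}.

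For sufficiency I would use that $T_gf$ is the primitive of $fg'$ vanishing at the origin, so that along the radius $\omega=re^{i\theta}$ one has $(T_gf)(te^{i\theta})=e^{i\theta}\int_0^t f(re^{i\theta})g'(re^{i\theta})\,dr$. If $\|f\|_{H^\infty_\alpha}\le 1$ then $|f(re^{i\theta})|\le (1-r^2)^{-\alpha}$, whence
\[
(1-t^2)^\beta\,|(T_gf)(te^{i\theta})|\le (1-t^2)^\beta\int_0^t\frac{|g'(re^{i\theta})|}{(1-r^2)^\alpha}\,dr .
\]
Taking the supremum over $z=te^{i\theta}\in\mathbb D$ reduces boundedness to the finiteness of $\sup_{t,\theta}$ of the right-hand side. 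I would then upgrade the $\limsup$ hypothesis to this full supremum by splitting at some $t_0<1$: for $t\ge t_0$ the hypothesis gives a bound, while for $t\le t_0$ the quantity is at most $\tfrac{t_0}{(1-t_0^2)^\alpha}\sup_{|z|\le t_0}|g'(z)|$ (using $(1-t^2)^\beta\le 1$ and continuity of $g'$ on a compact disk). This yields $\|T_g\|\lesssim\sup_{t,\theta}(\cdots)<\infty$; note this direction does not use $\log g'\in\mathcal B$.

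For necessity I would construct, for each direction $\theta$ and each radius $t$ near $1$, a function $f=f_{\theta,t}$ with $\|f\|_{H^\infty_\alpha}\lesssim 1$ uniformly and $(1-t^2)^\beta|(T_gf)(te^{i\theta})|\gtrsim\int_0^t (1-r^2)^{-\alpha}|g'(re^{i\theta})|\,dr$, so that the operator norm dominates the right-hand side and the stated condition follows on passing to $\sup_\theta$ and $\limsup_{t\to1^-}$. In view of the radial formula, the \emph{ideal} profile is $f(re^{i\theta})\approx (1-r^2)^{-\alpha}e^{-i\arg g'(re^{i\theta})}$: its modulus saturates the $H^\infty_\alpha$ bound and its argument cancels that of $g'$, so that $fg'$ has nearly constant argument and the integral accumulates without cancellation. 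Here $\log g'\in\mathcal B$ is exactly what makes $F=-i\log g'$ lie in $\mathcal B(\Omega^{1/2}_{\gamma,\theta})$ with a constant independent of $\theta$: on that sector $1-|z|^2\ge\tfrac34$, so the Bloch bound gives $|F'(z)|=|(\log g')'(z)|\le\tfrac43\|\log g'\|_{\mathcal B}=2C_F\le C_F/|z|$ with $C_F=\tfrac23\|\log g'\|_{\mathcal B}$, and Lemma \ref{le1} applies uniformly. Its output, the analytic function $u+i\widetilde u$ with $u\approx\operatorname{Re}F=\arg g'$ on the ray and $\widetilde u$ \emph{bounded}, supplies the phase-correcting factor $e^{-i(u+i\widetilde u)}=e^{\widetilde u}e^{-iu}$, whose modulus is bounded above and below and whose argument is $\approx-\arg g'$.

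The weighted growth $(1-r^2)^{-\alpha}$, together with the verification of the $H^\infty_\alpha$ norm, is where the conformal map $\psi_{\eta,\theta}$ and Lemma \ref{le2} enter: since $\psi_{\eta,\theta}$ carries the vertex $0$ to the boundary point $e^{i\theta}$ and the estimate $|\psi'_{\eta,\theta}|/(1-|\psi_{\eta,\theta}|^2)\le C_1/|z|$ converts the weight $(1-|\psi|^2)^\alpha$ into a power of the conformal factor, I would insert an appropriate power of $\psi'_{\eta,\theta}$ (equivalently transplant the phase construction through $\psi_{\eta,\theta}^{-1}$) to manufacture the required modulus while keeping $\sup_w(1-|w|^2)^\alpha|f(w)|\lesssim 1$. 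I expect the main obstacle to be precisely this simultaneous control, because $\log|f|$ and $\arg f$ are conjugate harmonic functions and cannot be prescribed independently along the ray, so one cannot simply set $f$ equal to the ideal profile. Lemma \ref{le1} is engineered to defeat this coupling — it delivers the prescribed real part (the phase data) while guaranteeing a bounded conjugate, so the modulus is perturbed only by a bounded factor — and Lemma \ref{le2} guarantees that transplanting through $\psi_{\eta,\theta}$ distorts the weight in a controlled, $\theta$-uniform fashion. The remaining work is bookkeeping: checking that $\varepsilon,\delta(\varepsilon)$ from Lemma \ref{le1}, the constant $C_1(\gamma,\eta)$ from Lemma \ref{le2}, and the bound on $\widetilde u$ are all uniform in $\theta$, so that the per-direction estimates combine into $\sup_\theta$ before the limit $t\to1^-$ is taken.
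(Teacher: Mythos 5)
Your sufficiency argument is exactly the paper's: integrate $fg'$ along the radius, bound $|f|$ by $(1-r^2)^{-\alpha}\|f\|_{H^\infty_\alpha}$, and upgrade the $\limsup$ hypothesis to a full supremum by splitting at $t_0$ and using compactness on $|z|\le t_0$. That half is complete and correct, and your observation that it does not need $\log g'\in\mathcal{B}$ matches Remark~\ref{re1}. The architecture of your necessity argument (phase-correcting test functions, the conjugate-function obstruction, Lemmas~\ref{le1} and~\ref{le2} as the tools to defeat it) is also the paper's.

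There is, however, a concrete misstep in how you feed the lemmas into the construction. You verify membership in $\mathcal{B}(\Omega^{1/2}_{\gamma,\theta})$ for $F=-i\log g'$ itself, using only that $1-|z|^2\ge\tfrac34$ on the small sector at the origin. That bound is true but useless: Lemma~\ref{le1} would then produce a $u$ approximating $\arg g'(xe^{i\theta})$ only for $x\in(0,\delta]$, i.e.\ near the \emph{origin} of $\mathbb{D}$, where $g'$ is bounded and the integral contributes $O(1)$. The function that must be put into Lemma~\ref{le1} is $F=-i\log(g'\circ\psi_{\eta,\theta})$: since $\psi_{\eta,\theta}(0)=e^{i\theta}$, the vertex of the sector corresponds to the boundary point, so the segment $x\in(0,\delta]$ pulls back the part of the radius $re^{i\theta}$ with $r$ near $1$ --- exactly where the integral $\int_0^t(1-r^2)^{-\alpha}|g'|\,dr$ accumulates. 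Verifying $|F'|\le C/|z|$ for this composed $F$ is precisely what Lemma~\ref{le2} is for (chain rule plus the Bloch bound on $\log g'$), not the weight control you assign it to. With your uncomposed $F$, the factor $e^{-iu\circ\psi^{-1}}$ cancels $\arg g'$ at the wrong points and the lower bound on $\operatorname{Re}\int fg'\,dr$ fails. Relatedly, the weight $(1-r^2)^{-\alpha}$ is better manufactured by the explicit factor $(1-e^{-2i\theta}z^2)^{-\alpha}$ (positive on the ray $re^{i\theta}$, so it leaves the phase untouched, and dominated by $(1-|z|^2)^{-\alpha}$ everywhere) than by inserting powers of $\psi'_{\eta,\theta}$, which would require two-sided distortion estimates and control of the extra argument they introduce.
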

\begin{proof}
Assume first that $$\limsup_{t\rightarrow1^-}\sup_{0\leq\theta<2\pi}(1-t^2)^\beta\int_0^t\frac{|g'(re^{i\theta})|}{(1-r^2)^\alpha}dr=+\infty\,.$$
Then choose a nondecreasing positive sequence $\{t_{0,n}\}_{n=1}^\infty$, with $t_{0,n}\rightarrow1^-$, such that for each $t_{0,n}$, there is an angle $\theta_n$, with $0\leq\theta_n<2\pi$, such that
$$(1-t_{0,n}^2)^\beta\int_0^{t_{0,n}}\frac{|g'(re^{i\theta_n})|}{(1-r^2)^\alpha}dr\geq n\,.$$

Define the function $F_n:\Omega^{1}_{\eta,\theta_n}\rightarrow\mathbb{D}$ by
$$F_n(z)=-i\log\left(g'\circ\psi_{\eta,\theta_n}(z)\right)\,.$$
Since $log(g')\in\mathcal{B}$, there is a constant $K>0$ such that
$$|g''(z)|\leq \frac{K|g'(z)|}{(1-|z|^2)}\quad \text{for all } z\in \mathbb{D}\,.$$
Then by Lemma~\ref{le2}, we have
\begin{equation}\begin{split}\nonumber
|F'_n(z)|&=\frac{\left|g''(\psi_{\eta,\theta_n}(z))\right|}{\left|g'(\psi_{\eta,\theta_n}(z))\right|}\cdot\left|\psi'_{\eta,\theta_n}(z)\right|\leq\frac{K\left|\psi'_{\eta,\theta_n}(z)\right|}{1-\left|\psi_{\eta,\theta_n}(z)\right|^2}\leq\frac{K\cdot C_1(\gamma,\eta)}{|z|}\,,
\end{split}\end{equation}
for all $z\in \Omega^{1/2}_{\gamma,\theta_n}$\,.
Hence, the restriction of $F_n$ to $\Omega^{1/2}_{\gamma,\theta_n}$ belongs to $\mathcal{B}(\Omega^{1/2}_{\gamma,\theta_n})$\,.

By Lemma~\ref{le1} with $\varepsilon=\frac{\pi}{3}$, we have a positive number $\delta(\frac{\pi}{3})$ such that for any $F_n\in \mathcal{B}(\Omega^{1/2}_{\gamma,\theta_n})$, there is a real harmonic function $u_n$ defined in $\Omega^{1}_{\eta,\theta_n}$ such that:

(1) $\left|\text{Re}(F_n(xe^{i\theta_n}))-u_n(xe^{i\theta_n})\right|\leq\frac{\pi}{3}$,\quad for $x\in (0,\delta(\frac{\pi}{3})]$;

(2) $|\widetilde{u}_n(z)|\leq C(\frac{\pi}{3},\gamma,\eta,K\cdot C_1(\gamma,\eta))=:C_2(\gamma,\eta)<\infty$,\quad for $z\in\Omega^{1}_{\eta,\theta_n}$, where $\widetilde{u}_n$ is the conjugate of $u_n$ such that $\widetilde{u}_n\left(\frac{1}{2}e^{i\theta_n}\right)=0$.

From (1) and the definition of conformal function $\psi_{\eta,\theta_n}$, we see that there exists a $r_\eta$ with $0<r_\eta<1$ such that
$$\left|\arg{g'(re^{i\theta_n})}-u_n(\psi^{-1}_{\eta,\theta_n}(re^{i\theta_n}))\right|\leq\frac{\pi}{3},\quad \text{for }r\in [r_\eta,1)\,.$$

Now define
$$h_n(z):=e^{-i(u_n(z)+i\widetilde{u}_n(z))}\,,$$
and consider the sequence $\{f_n\}_{n=1}^\infty$ of functions defined by
$$f_n(z):=\frac{h_n\circ\psi^{-1}_{\eta,\theta_n}(z)}{(1-e^{-2\theta_ni}z^2)^\alpha}\,.$$
Then by (2), the sequence $\{f_n\}_{n=1}^\infty$ is bounded in $H^{\infty}_{\alpha}$ with
\begin{equation}\begin{split}\nonumber
\|f_n\|_{H^{\infty}_{\alpha}}&=\sup_{z\in\mathbb{D}}\left\{(1-|z|^2)^\alpha \left|\frac{h_n\circ\psi^{-1}_{\eta,\theta_n}(z)}{(1-e^{-2\theta_ni}z^2)^\alpha}\right|\right\}\\
&\leq\left\|h_n\circ\psi^{-1}_{\eta,\theta_n}(z)\right\|_{H^{\infty}_0}\\
&\leq e^{C_2(\gamma,\eta)}.
\end{split}\end{equation}

Now choose $n$ large enough such that $r_\eta\leq t_{0,n}<1$, we observe that
\begin{equation}\begin{split}\nonumber
\|(T_gf_n)\|_{H^{\infty}_{\beta}}&=\sup_{z\in\mathbb{D}}\left\{(1-|z|^2)^\beta \left|\int_0^zf_n(\zeta)g'(\zeta)d\zeta\right|\right\}\\
&\geq(1-t^2_{0,n})^\beta\left|\int_0^{t_{0,n}}f_n(re^{i\theta_n})g'(re^{i\theta_n})e^{i\theta_n}dr\right|\\
&\geq\left|\text{Re}\left((1-t^2_{0,n})^\beta\int_{r_\eta}^{t_{0,n}}f_n(re^{i\theta_n})g'(re^{i\theta_n})dr\right)\right|\\
&\phantom{\geq}-\left|\text{Re}\left((1-t^2_{0,n})^\beta\int_0^{r_\eta}f_n(re^{i\theta_n})g'(re^{i\theta_n})dr\right)\right|
\end{split}\end{equation}

First, we see that
\begin{equation}\begin{split}\nonumber
\left|\text{Re}\left((1-t^2_{0,n})^\beta\int_0^{r_\eta}f_n(re^{i\theta_n})g'(re^{i\theta_n})dr\right)\right|&\leq\int_0^{r_\eta}|f_n(re^{i\theta_n})||g'(re^{i\theta_n})|dr\\
&\leq\|f_n\|_{H^{\infty}_{\alpha}}\int_0^{r_\eta}\frac{|g'(re^{i\theta_n})|}{(1-r^2)^\alpha}dr\\
&\leq\frac{e^{C_2(\gamma,\eta)}C_3(\eta)}{(1-r^2_\eta)^\alpha}\,,
\end{split}\end{equation}
where $C_3(\eta):=\sup_{|z|\leq r_\eta}\{|g'(z)|\}$\,.

Next, to estimate another integral, we see that
\begin{equation}\begin{split}\nonumber
&\phantom{=\;}\left|\text{Re}\left((1-t^2_{0,n})^\beta\int_{r_\eta}^{t_{0,n}}f_n(re^{i\theta_n})g'(re^{i\theta_n})dr\right)\right|\\
&=\left|(1-t^2_{0,n})^\beta\text{Re}\left(\int_{r_\eta}^{t_{0,n}}\frac{g'(re^{i\theta_n})}{(1-r^2)^\alpha}e^{-i\left(u_n(\psi^{-1}_{\eta,\theta_n}(re^{i\theta_n}))+i\widetilde{u}_n(\psi^{-1}_{\eta,\theta_n}(re^{i\theta_n}))\right)}dr\right)\right|\\
&=\Bigg|(1-t^2_{0,n})^\beta\int_{r_\eta}^{t_{0,n}}\frac{|g'(re^{i\theta_n})|}{(1-r^2)^\alpha}e^{\widetilde{u}_n\left(\psi^{-1}_{\eta,\theta_n}(re^{i\theta_n})\right)}\times\\
&\phantom{=\;}\text{Re}\left(e^{i\left(\arg(g'(re^{i\theta_n}))-u_n(\psi^{-1}_{\eta,\theta_n}(re^{i\theta_n}))\right)}\right)dr\Bigg|\\
&\geq\cos(\frac{\pi}{3})e^{-C_2(\gamma,\eta)}\left|(1-t^2_{0,n})^\beta\int_{r_\eta}^{t_{0,n}}\frac{|g'(re^{i\theta_n})|}{(1-r^2)^\alpha}dr\right|\\
&\geq\cos(\frac{\pi}{3})e^{-C_2(\gamma,\eta)}(1-t^2_{0,n})^\beta\left(\left|\int_{0}^{t_{0,n}}\frac{|g'(re^{i\theta_n})|}{(1-r^2)^\alpha}dr\right|-\left|\int_{0}^{r_\eta}\frac{|g'(re^{i\theta_n})|}{(1-r^2)^\alpha}dr\right|\right)\\
&\geq\cos(\frac{\pi}{3})e^{-C_2(\gamma,\eta)}\left(\left|(1-t^2_{0,n})^\beta\int_{0}^{t_{0,n}}\frac{|g'(re^{i\theta_n})|}{(1-r^2)^\alpha}dr\right|-\left|\int_{0}^{r_\eta}\frac{|g'(re^{i\theta_n})|}{(1-r^2)^\alpha}dr\right|\right)\\
&\geq\cos(\frac{\pi}{3})e^{-C_2(\gamma,\eta)}\left(n-\left|\int_{0}^{r_\eta}\frac{|g'(re^{i\theta_n})|}{(1-r^2)^\alpha}dr\right|\right)\\
&\geq\cos(\frac{\pi}{3})e^{-C_2(\gamma,\eta)}\left(n-\frac{C_3(\eta)}{(1-r^2_\eta)^\alpha}\right)\,.
\end{split}\end{equation}
Hence, for $n$ large enough such that $r_\eta\leq t_{0,n}<1$, we have
$$\|(T_gf_n)\|_{H^{\infty}_{\beta}}\geq\cos(\frac{\pi}{3})e^{-C_2(\gamma,\eta)}n-e^{-C_2(\gamma,\eta)}\left(e^{2C_2(\gamma,\eta)}+\cos(\frac{\pi}{3})\right)\frac{C_3(\eta)}{(1-r^2_\eta)^\alpha}\,.$$
Letting $n\rightarrow\infty$, it follows that $T_g: H^{\infty}_\alpha\rightarrow H^{\infty}_\beta$ is not bounded.

Conversely, suppose that
$$\limsup_{t\rightarrow1^-}\sup_{0\leq\theta<2\pi}(1-t^2)^\beta\int_0^t\frac{|g'(re^{i\theta})|}{(1-r^2)^\alpha}dr<+\infty\,,$$
then there exists $N>0$, $0<t_0<1$ such that
$$\sup_{0\leq\theta<2\pi}(1-t^2)^\beta\int_0^t\frac{|g'(re^{i\theta})|}{(1-r^2)^\alpha}dr\leq N\quad \text{whenever }t_0<t<1\,.$$
For any $f\in H^{\infty}_\alpha$, we have
\begin{equation}\begin{split}\nonumber
\|T_g(f)\|_{H^{\infty}_{\beta}}&=\sup_{z\in\mathbb{D}}\left\{(1-|z|^2)^\beta \left|\int_0^zf(\zeta)g'(\zeta)d\zeta\right|\right\}\\
&=\sup_{0\leq\theta<2\pi}\sup_{0\leq R<1}\left\{(1-R^2)^\beta\left|\int_0^{Re^{i\theta}}f(\zeta)g'(\zeta)d\zeta\right|\right\}\\
&=\sup_{0\leq\theta<2\pi}\sup_{0\leq R<1}\left\{(1-R^2)^\beta\left|\int_0^{R}f(re^{i\theta})g'(re^{i\theta})e^{i\theta}dr\right|\right\}\\
&\leq\sup_{0\leq\theta<2\pi}\sup_{0\leq R\leq t_0}\left\{(1-R^2)^\beta\int_0^{R}|f(re^{i\theta})||g'(re^{i\theta})|dr\right\}\\
&\phantom{\leq}+\sup_{0\leq\theta<2\pi}\sup_{t_0< R<1}\left\{(1-R^2)^\beta\int_0^{R}|f(re^{i\theta})||g'(re^{i\theta})|dr\right\}\\
&\leq \|f\|_{H^{\infty}_{\alpha}}\sup_{0\leq\theta<2\pi}\sup_{0\leq R\leq t_0}\left\{(1-R^2)^\beta\int_0^{R}\frac{|g'(re^{i\theta})|}{(1-r^2)^\alpha}dr\right\}\\
&\phantom{\leq}+\|f\|_{H^{\infty}_{\alpha}}\sup_{0\leq\theta<2\pi}\sup_{t_0< R<1}\left\{(1-R^2)^\beta\int_0^{R}\frac{|g'(re^{i\theta})|}{(1-r^2)^\alpha}dr\right\}\\
&\leq \|f\|_{H^{\infty}_{\alpha}}(M_{t_0}+N)\,,
\end{split}\end{equation}
where $M_{t_0}:=\sup_{0\leq\theta<2\pi}\sup_{0\leq R\leq t_0}\left\{(1-R^2)^\beta\int_0^{R}\frac{|g'(re^{i\theta})|}{(1-r^2)^\alpha}dr\right\}$\,.
Accordingly, $T_g: H^{\infty}_\alpha\rightarrow H^{\infty}_\beta$ is bounded.
\end{proof}

\begin{remark}\label{re1}
From the proof of Theorem~\ref{th1}, we see that the condition $$\limsup_{t\rightarrow1^-}\sup_{0\leq\theta<2\pi}(1-t^2)^\beta\int_0^t\frac{|g'(re^{i\theta})|}{(1-r^2)^\alpha}dr<+\infty$$
is sufficient for the boundedness of $T_g: H^{\infty}_\alpha\rightarrow H^{\infty}_\beta$ without the requirement that $\log(g')\in \mathcal{B}$.
\end{remark}

\begin{remark}\label{re2}
If $g\in H(\mathbb{D})$ is univalent, then by \cite{PB}, it holds that $\log(g')\in \mathcal{B}$, thus Theorem~\ref{th1} holds for the univalent case.
\end{remark}

\begin{remark}\label{re3}
It should be noticed that Basallote et. al. \cite{BCHMP} had given the following complete characterization of the boundedness of $T_g: H^{\infty}_\alpha\rightarrow H^{\infty}_\beta$ for the general symbol $g\in H(\mathbb{D})$ when $\beta>0$: $T_g: H^{\infty}_\alpha\rightarrow H^{\infty}_\beta$ is bounded if and only if
$$\sup_{z\in\mathbb{D}}(1-|z|^2)^{\beta+1-\alpha}|g'(z)|<\infty\,.$$
\end{remark}

As a corollary, we give the following result obtained by Smith, Stolyarov and Volberg in \cite{SSV}.
\begin{corollary}\label{cor1}
Let $g\in H(\mathbb{D})$ such that $\log(g')\in \mathcal{B}$. Then $T_g: H^{\infty}_0\rightarrow H^{\infty}_0$ is bounded if and only if
$$\sup_{0\leq\theta<2\pi}\int_0^1|g'(re^{i\theta})|dr<+\infty\,.$$
\end{corollary}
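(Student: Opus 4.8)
The plan is to deduce Corollary~\ref{cor1} directly from Theorem~\ref{th1} by specializing to the case $\alpha=\beta=0$. With these parameters, and noting that the hypothesis $\log(g')\in\mathcal{B}$ is exactly the standing assumption of Theorem~\ref{th1}, the boundedness criterion there becomes
$$\limsup_{t\rightarrow1^-}\sup_{0\leq\theta<2\pi}\int_0^t|g'(re^{i\theta})|dr<+\infty\,.$$
Thus the entire task reduces to showing that this $\limsup$ condition is equivalent to the stated condition $\sup_{0\leq\theta<2\pi}\int_0^1|g'(re^{i\theta})|dr<+\infty$.

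The key structural observation I would exploit is monotonicity. Writing $I(t,\theta):=\int_0^t|g'(re^{i\theta})|dr$, the integrand is nonnegative, so for each fixed $\theta$ the map $t\mapsto I(t,\theta)$ is nondecreasing; consequently $\Phi(t):=\sup_{0\leq\theta<2\pi}I(t,\theta)$ is nondecreasing in $t$ as well, being a supremum of nondecreasing functions. For a nondecreasing function, the $\limsup$ as $t\to1^-$ coincides with $\sup_{0<t<1}\Phi(t)$, which removes the $\limsup$ and replaces it by a plain supremum over $t$.

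Next I would interchange the two suprema. Since a supremum over a product set equals the iterated supremum in either order,
$$\sup_{0<t<1}\sup_{0\leq\theta<2\pi}I(t,\theta)=\sup_{0\leq\theta<2\pi}\sup_{0<t<1}I(t,\theta)\,,$$
and then, again by monotonicity in $t$ together with the monotone convergence theorem (nonnegative integrand), $\sup_{0<t<1}I(t,\theta)=\lim_{t\to1^-}I(t,\theta)=\int_0^1|g'(re^{i\theta})|dr$ for each $\theta$. Combining these identities shows that the $\limsup$ quantity and the quantity $\sup_{0\leq\theta<2\pi}\int_0^1|g'(re^{i\theta})|dr$ are literally equal, so one is finite precisely when the other is, and the corollary follows.

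I do not anticipate a genuine obstacle: the argument is an elementary interchange of suprema powered by the monotonicity of $t\mapsto I(t,\theta)$, and no new analytic input beyond Theorem~\ref{th1} is needed. The only point requiring a moment's care is the passage $\sup_{0<t<1}I(t,\theta)=\int_0^1|g'(re^{i\theta})|dr$, which is immediate from the nonnegativity of $|g'|$.
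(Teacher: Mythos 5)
Your proposal is correct and follows essentially the same route as the paper: specialize Theorem~\ref{th1} to $\alpha=\beta=0$ and then use the monotonicity of $t\mapsto\int_0^t|g'(re^{i\theta})|\,dr$ to identify the $\limsup$-$\sup$ quantity with $\sup_{0\leq\theta<2\pi}\int_0^1|g'(re^{i\theta})|\,dr$. The paper phrases the exchange as swapping $\limsup_{t\to1^-}$ with $\sup_\theta$ directly, while you first replace the $\limsup$ by a plain supremum over $t$ and then interchange the two suprema, but this is the same elementary argument.
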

\begin{proof}
By Theorem~\ref{th1},  $T_g: H^{\infty}_0\rightarrow H^{\infty}_0$ is bounded if and only if
$$\limsup_{t\rightarrow1^{-}}\sup_{0\leq\theta<2\pi}\int_0^t|g'(re^{i\theta})|dr<+\infty\,.$$
Since $\int_0^t|g'(re^{i\theta})|dr$ is increasing as $t\rightarrow1^{-}$, it follows that
\begin{equation}\begin{split}\nonumber
\limsup_{t\rightarrow1^{-}}\sup_{0\leq\theta<2\pi}\int_0^t|g'(re^{i\theta})|dr&=\sup_{0\leq\theta<2\pi}\limsup_{t\rightarrow1^{-}}\int_0^t|g'(re^{i\theta})|dr\\
&=\sup_{0\leq\theta<2\pi}\int_0^1|g'(re^{i\theta})|dr.
\end{split}\end{equation}
\end{proof}

Recall that the disk algebra $\mathcal{A}$ is the space consisting of all analytic functions $f$ defined on $\mathbb D$ which can be continuously extended to $\overline{\mathbb D}$ and endowed with the norm $\|f\|=\sup_{z\in\mathbb D}|f(z)|$\,.

\begin{corollary}\label{cor2}
Let $g\in H(\mathbb{D})$ such that $\log(g')\in \mathcal{B}$. Then $T_g$ acting on the disk algebra $\mathcal{A}$ is bounded if and only if $g\in\mathcal{A}$ and
$$\sup_{0\leq\theta<2\pi}\int_0^1|g'(re^{i\theta})|dr<+\infty\,.$$
\end{corollary}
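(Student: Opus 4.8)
The plan is to read the statement through the lens of Corollary~\ref{cor1}: since $\log(g')\in\mathcal{B}$, the growth condition
$$\sup_{0\leq\theta<2\pi}\int_0^1|g'(re^{i\theta})|dr<+\infty$$
is by Corollary~\ref{cor1} exactly equivalent to the boundedness of $T_g:H^{\infty}_0\to H^{\infty}_0$. Thus the corollary amounts to the assertion that $T_g$ is bounded on $\mathcal{A}$ if and only if $g\in\mathcal{A}$ and $T_g$ is bounded on $H^{\infty}_0$. I would prove the two implications separately, exploiting throughout that $\mathcal{A}$ is a closed subspace of $H^{\infty}_0$ carrying the same supremum norm.

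For the necessity direction, assume $T_g:\mathcal{A}\to\mathcal{A}$ is bounded. Testing against the constant function $1\in\mathcal{A}$ gives $T_g1=g-g(0)$, so $g=T_g1+g(0)\in\mathcal{A}$ at once. To recover the integral condition I would argue by contradiction and reuse the test functions $f_n$ constructed in the proof of Theorem~\ref{th1} with $\alpha=\beta=0$; these are analytic on $\mathbb{D}$ with $\|f_n\|_{H^{\infty}_0}\leq e^{C_2(\gamma,\eta)}$ yet $\|T_gf_n\|_{H^{\infty}_0}\to\infty$. Since the $f_n$ need not extend continuously to $\overline{\mathbb{D}}$, I would replace each by its dilate $f_n^{(s)}(z):=f_n(sz)$, which lies in $\mathcal{A}$, satisfies $\|f_n^{(s)}\|_{\mathcal{A}}\leq\|f_n\|_{H^{\infty}_0}$, and for each fixed $z\in\mathbb{D}$ obeys $T_gf_n^{(s)}(z)\to T_gf_n(z)$ as $s\to1^-$ by dominated convergence on the segment $[0,z]$, on which $g'$ is bounded. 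Boundedness of $T_g$ on $\mathcal{A}$ then forces $|T_gf_n(z)|\leq\|T_g\|\,e^{C_2(\gamma,\eta)}$ for every $z$, hence $\|T_gf_n\|_{H^{\infty}_0}\leq\|T_g\|\,e^{C_2(\gamma,\eta)}$, contradicting $\|T_gf_n\|_{H^{\infty}_0}\to\infty$.

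For the sufficiency direction, assume $g\in\mathcal{A}$ and the integral condition. By Corollary~\ref{cor1} the operator $T_g$ is bounded as a map $H^{\infty}_0\to H^{\infty}_0$, so its restriction to $\mathcal{A}$ is automatically bounded in norm; it remains only to check that this restriction maps into $\mathcal{A}$. I would verify $T_gp\in\mathcal{A}$ first for polynomials $p$ by integrating by parts,
$$T_gp(z)=\int_0^zp(\omega)g'(\omega)d\omega=p(z)g(z)-p(0)g(0)-\int_0^zp'(\omega)g(\omega)d\omega,$$
where the first two terms lie in $\mathcal{A}$ because $g\in\mathcal{A}$, and the last term is the primitive of the $\mathcal{A}$-function $p'g$, hence Lipschitz on $\mathbb{D}$ and thus again in $\mathcal{A}$. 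Since polynomials are dense in $\mathcal{A}$, $T_g$ is continuous on $\mathcal{A}$, and $\mathcal{A}$ is closed under uniform convergence, it follows that $T_gf\in\mathcal{A}$ for every $f\in\mathcal{A}$.

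The main obstacle I anticipate is the necessity direction: boundedness on the proper closed subspace $\mathcal{A}$ is a priori weaker than boundedness on $H^{\infty}_0$, so the lower bounds from Theorem~\ref{th1} cannot be invoked directly. The dilation device is what bridges this gap, transferring the $H^{\infty}_0$-blow-up of $T_gf_n$ into a statement about genuine disk-algebra test functions; the one point demanding care is the passage to the limit $s\to1^-$, which is legitimate precisely because $z$ ranges over the open disk, where $g'$ stays bounded on each segment of integration.
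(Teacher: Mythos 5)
Your proof is correct, but it takes a more self-contained route than the paper. The paper's entire proof is a one-line citation: it invokes \cite[Proposition~2.17]{AJS} for the equivalence ``$T_g:\mathcal{A}\to\mathcal{A}$ is bounded if and only if $g\in\mathcal{A}$ and $T_g:H^{\infty}_0\to H^{\infty}_0$ is bounded,'' and then applies Corollary~\ref{cor1}. You identify exactly the same reduction but then prove the cited equivalence from scratch: the dilation device $f\mapsto f(s\cdot)$ to transfer boundedness from $\mathcal{A}$ to $H^{\infty}_0$ (which is precisely the trick the paper itself uses for $S_g$ in Proposition~\ref{pro1}), and integration by parts plus density of polynomials for the converse inclusion $T_g(\mathcal{A})\subset\mathcal{A}$. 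All steps check out; in particular the passage $T_gf_n^{(s)}(z)\to T_gf_n(z)$ is legitimate since $[0,z]$ is a compact subset of $\mathbb{D}$ on which $g'$ is bounded and $f_n(s\cdot)\to f_n$ uniformly. The only simplification worth noting is that your necessity argument is slightly roundabout: rather than running the contradiction through the specific test functions $f_n$ of Theorem~\ref{th1}, the same dilation argument applied to an arbitrary $f\in H^{\infty}_0$ shows directly that boundedness on $\mathcal{A}$ implies boundedness on $H^{\infty}_0$ with the same norm (and without using $\log(g')\in\mathcal{B}$ at that step), after which Corollary~\ref{cor1} yields the integral condition. What your approach buys is independence from the external reference; what the paper's buys is brevity.
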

\begin{proof}
From \cite[Proposition~2.17]{AJS} we see that for $g\in H(\mathbb{D})$, $T_g: \mathcal{A}\rightarrow \mathcal{A}$ is bounded if and only if $g\in \mathcal{A}$ and $T_g: H^{\infty}_0\rightarrow H^{\infty}_0$ is bounded. Thus, the corollary follows from Corollary~\ref{cor1}.
\end{proof}

\begin{proposition}\label{pro1}
If $g\in H(\mathbb{D})$, then the following statements are equivalent:

(1) $S_g: \mathcal{A}\rightarrow \mathcal{A}$ is bounded;

(2) $S_g: H^{\infty}_0\rightarrow H^{\infty}_0$ is bounded;

(3) $T_g: H^{\infty}_0\rightarrow H^{\infty}_0$ is bounded.
\end{proposition}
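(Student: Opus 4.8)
The plan is to reduce all three statements to the single condition $g\in H^\infty_0$ together with the boundedness of the multiplication operator $M_gf=gf$, by exploiting the product-rule identity
$$T_gf+S_gf=fg-f(0)g(0),\qquad f\in H(\mathbb{D}).$$
The map $f\mapsto f(0)g(0)$ is a bounded rank-one operator on each space under consideration, and $M_g$ is bounded on $H^\infty_0$ (with $\|M_g\|=\|g\|_{H^\infty_0}$) precisely when $g\in H^\infty_0$. Hence, once we know $g\in H^\infty_0$, the identity shows that $T_g$ is bounded on $H^\infty_0$ if and only if $S_g=M_g-T_g-f(0)g(0)$ is bounded on $H^\infty_0$; this yields $(2)\Leftrightarrow(3)$. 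The whole argument therefore hinges on extracting the membership $g\in H^\infty_0$ from the boundedness hypotheses.

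For $(3)$ this is immediate: taking $f\equiv1$ gives $T_g1=g-g(0)$, so boundedness of $T_g$ forces $g-g(0)\in H^\infty_0$ and hence $g\in H^\infty_0$. For $S_g$ the extraction is the crux, and I would use the Schwarz--Pick extremal functions. For $a\in\mathbb{D}$ set $\varphi_a(z)=\frac{a-z}{1-\overline{a}z}$, so that $\|\varphi_a\|_{H^\infty_0}=1$ and $(1-|a|^2)|\varphi_a'(a)|=1$; note $\varphi_a\in\mathcal{A}$. Since $(S_gf)'=f'g$ and since every $h\in H^\infty_0$ obeys $(1-|z|^2)|h'(z)|\le\|h\|_{H^\infty_0}$, boundedness of $S_g$ (on $H^\infty_0$, or merely on $\mathcal{A}$, as $\varphi_a\in\mathcal{A}$) gives
$$(1-|z|^2)|f'(z)|\,|g(z)|=(1-|z|^2)|(S_gf)'(z)|\le\|S_gf\|_{H^\infty_0}\le\|S_g\|\,\|f\|_{H^\infty_0}.$$
Evaluating at $z=a$ with $f=\varphi_a$ yields $|g(a)|\le\|S_g\|$ for every $a\in\mathbb{D}$, i.e.\ $g\in H^\infty_0$ with $\|g\|_{H^\infty_0}\le\|S_g\|$. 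Together with the previous observation this closes $(2)\Leftrightarrow(3)$.

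For $(1)\Leftrightarrow(2)$ the two inclusions are handled by soft arguments. To prove $(2)\Rightarrow(1)$ I first note $g\in H^\infty_0$ by the test-function step, so for each polynomial $p$ the integrand $p'g$ lies in $H^\infty_0$; since $\mathbb{D}$ is convex, an antiderivative of a bounded analytic function is Lipschitz on $\mathbb{D}$ and thus extends continuously to $\overline{\mathbb{D}}$, giving $S_gp\in\mathcal{A}$. As the polynomials are dense in $\mathcal{A}$, as $S_g$ is bounded on $H^\infty_0\supseteq\mathcal{A}$, and as $\mathcal{A}$ is closed in $H^\infty_0$, it follows that $S_gf\in\mathcal{A}$ for every $f\in\mathcal{A}$, so $S_g\colon\mathcal{A}\to\mathcal{A}$ is bounded. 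For the converse $(1)\Rightarrow(2)$ I would use dilation: for $f\in H^\infty_0$ put $f_r(z)=f(rz)$ with $0<r<1$, so that $f_r\in\mathcal{A}$ and $\|f_r\|_{H^\infty_0}\le\|f\|_{H^\infty_0}$. Then $\|S_gf_r\|_{H^\infty_0}\le\|S_g\|_{\mathcal{A}\to\mathcal{A}}\|f\|_{H^\infty_0}$, while $f_r'\to f'$ uniformly on compact subsets of $\mathbb{D}$ forces $S_gf_r\to S_gf$ pointwise; letting $r\to1^-$ gives $\|S_gf\|_{H^\infty_0}\le\|S_g\|_{\mathcal{A}\to\mathcal{A}}\|f\|_{H^\infty_0}$, which is $(2)$.

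The main obstacle is the extraction $g\in H^\infty_0$ from boundedness of $S_g$: unlike $T_g$, one cannot read off $g$ from a single test value, and the Schwarz--Pick extremal functions $\varphi_a$ are exactly what convert the derivative identity $(S_gf)'=f'g$ into a pointwise bound on $g$. Once this is secured, the remaining steps are routine: the product-rule identity reduces $(2)\Leftrightarrow(3)$ to the boundedness of $M_g$, and the transfer between $\mathcal{A}$ and $H^\infty_0$ is handled by the density/closedness and dilation arguments above.
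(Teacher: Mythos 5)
Your proof is correct, and it differs from the paper's in one substantive way: the paper disposes of the equivalence $(2)\Leftrightarrow(3)$ by citing Anderson--Jovovic--Smith \cite{AJS}, whereas you reprove it from scratch via the identity $T_gf+S_gf=fg-f(0)g(0)$ together with an extraction of $g\in H^\infty_0$ from either boundedness hypothesis (from $T_g$ by testing on $f\equiv 1$; from $S_g$ by testing on the automorphisms $\varphi_a$ and using the Schwarz--Pick bound $(1-|z|^2)|h'(z)|\le\|h\|_{H^\infty_0}$). This self-contained route is the standard proof behind the cited fact, and it has the side benefit of making explicit a step the paper leaves tacit: in the implication $(2)\Rightarrow(1)$ the paper asserts ``since $gp'\in H^\infty_0$'' for a polynomial $p$, which already presupposes $g\in H^\infty_0$ --- exactly what your test-function step supplies. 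For the transfer between $\mathcal{A}$ and $H^\infty_0$ your argument coincides with the paper's: density of polynomials plus closedness of $\mathcal{A}$ in $H^\infty_0$ for one direction (you additionally justify $S_gp\in\mathcal{A}$ via the Lipschitz estimate $|\int_w^z gp'|\le M|z-w|$ on the convex domain $\mathbb{D}$), and the dilation $f_r(z)=f(rz)$ for the other. In short: same skeleton for $(1)\Leftrightarrow(2)$, but a more elementary and self-contained treatment of $(2)\Leftrightarrow(3)$ in place of the paper's citation.
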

\begin{proof}
From \cite{AJS} we see that for $g\in H(\mathbb{D})$, $S_g: H^{\infty}_0\rightarrow H^{\infty}_0$ is bounded if and only if $T_g: H^{\infty}_0\rightarrow H^{\infty}_0$ is bounded.

Assume first that $S_g: H^{\infty}_0\rightarrow H^{\infty}_0$ is bounded, then for any polynomial $p$, since $gp'\in H^{\infty}_0$, we have
$$(S_gp)(z)=\int_0^zg(\zeta)p'(\zeta)d\zeta\in \mathcal{A}\,.$$
It is well known that polynomials are dense in $\mathcal{A}$, it follows that $S_g: \mathcal{A}\rightarrow \mathcal{A}$ is bounded.

Conversely, assume that $S_g: \mathcal{A}\rightarrow \mathcal{A}$ is bounded. Let $f\in H^{\infty}_0$ and $f_r(z)=f(rz)$ for $0<r<1$, then we have $f_r\in \mathcal{A}$\,. For any $z\in \mathbb{D}$,
$$|(S_gf)(z)|=\left|\lim_{r\rightarrow1^-}(S_gf_r)(z)\right|\leq\lim_{r\rightarrow1^-}\|S_g\|\|f_r\|_{H^{\infty}_0}\leq\|S_g\|\|f\|_{H^{\infty}_0}\,,$$
which implies that $S_g: H^{\infty}_0\rightarrow H^{\infty}_0$ is bounded.
\end{proof}

\begin{corollary}\label{cor3}
Let $g\in H(\mathbb{D})$ such that $\log(g')\in \mathcal{B}$. Then $S_g$ acting on the disk algebra $\mathcal{A}$ (or $H^{\infty}_0$) is bounded if and only if
$$\sup_{0\leq\theta<2\pi}\int_0^1|g'(re^{i\theta})|dr<+\infty\,.$$
\end{corollary}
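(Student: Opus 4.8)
The plan is to deduce this statement by chaining together the two structural results already established, namely Proposition~\ref{pro1} and Corollary~\ref{cor1}, so that essentially no new computation is required. The hypothesis $\log(g')\in\mathcal{B}$ enters only through the application of Corollary~\ref{cor1}; the equivalence of the various $S_g$- and $T_g$-boundedness conditions comes for free from Proposition~\ref{pro1}.

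First I would record that Proposition~\ref{pro1} applies to any $g\in H(\mathbb{D})$ and asserts the equivalence of three statements: the boundedness of $S_g$ on $\mathcal{A}$, the boundedness of $S_g$ on $H^{\infty}_0$, and the boundedness of $T_g$ on $H^{\infty}_0$. In particular, $S_g$ acting on $\mathcal{A}$ (or on $H^{\infty}_0$) is bounded if and only if $T_g:H^{\infty}_0\rightarrow H^{\infty}_0$ is bounded. Then, since we are assuming $\log(g')\in\mathcal{B}$, Corollary~\ref{cor1} is available and tells us that $T_g:H^{\infty}_0\rightarrow H^{\infty}_0$ is bounded if and only if
$$\sup_{0\leq\theta<2\pi}\int_0^1|g'(re^{i\theta})|dr<+\infty\,.$$
Concatenating these two equivalences yields the asserted characterization at once.

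Because both ingredients are already in hand, there is no genuine obstacle in this argument; it is a direct corollary. The only point deserving a moment's care is to verify that the restriction $\log(g')\in\mathcal{B}$ is exactly what is needed to invoke Corollary~\ref{cor1}, whereas Proposition~\ref{pro1} requires no such hypothesis, so that the bridge from $S_g$ to $T_g$ is valid for the general symbol and the integral criterion is then imported from the $\mathcal{B}$-setting.
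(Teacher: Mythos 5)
Your proposal is correct and is exactly the intended argument: the paper states Corollary~\ref{cor3} without proof precisely because it follows by chaining Proposition~\ref{pro1} (valid for general $g$) with Corollary~\ref{cor1} (which is where $\log(g')\in\mathcal{B}$ is used). Nothing further is needed.
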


Now for the boundedness of the companion operators $S_g: H^{\infty}_\alpha\rightarrow H^{\infty}_\beta$, due to Corollary~\ref{cor3}, we just need to consider the case when $\alpha>0$. In this case, since the weight $(1-|z|^2)^\alpha$ is normal, we have $H^{\infty}_\alpha=\mathcal{B}^{\infty}_{\alpha+1}$, where $\mathcal{B}^{\infty}_{\alpha}$ is a weighted \mbox{Bloch} space defined as follows:
$$\mathcal{B}^{\infty}_{\alpha}=\{f\in H(\mathbb{D}):\ \|f\|_{\mathcal{B}^{\infty}_{\alpha}}=\sup_{z\in\mathbb{D}}(1-|z|^2)^\alpha|f'(z)|<\infty\}\,.$$
Keep in mind that $H^{\infty}_\alpha=\mathcal{B}^{\infty}_{\alpha+1}$ and consider the sequence $\{g_n\}_{n=1}^\infty$ of functions defined by
$$g_n(z):=\int_0^z\frac{f_n(\zeta)d\zeta}{(1-e^{-2\theta_ni}\zeta^2)}\,,$$
with $f_n$ defined in the proof of Theorem~\ref{th1}, we have
\begin{theorem}\label{th2}
Let $g\in H(\mathbb{D})$ such that $\log(g)\in \mathcal{B}$ and $0<\alpha,0\leq\beta$. Then $S_g: H^{\infty}_\alpha\rightarrow H^{\infty}_\beta$ is bounded if and only if
$$\limsup_{t\rightarrow1^-}\sup_{0\leq\theta<2\pi}(1-t^2)^\beta\int_0^t\frac{|g(re^{i\theta})|}{(1-r^2)^{\alpha+1}}dr<+\infty\,.$$
\end{theorem}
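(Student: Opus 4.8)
The plan is to follow the proof of Theorem~\ref{th1} essentially verbatim, replacing $g'$ by $g$ throughout and replacing the test functions $f_n$ by their weighted antiderivatives $g_n$. The bridge that makes $S_g$ fit the same template as $T_g$ is the normal-weight identification $H^{\infty}_\alpha=\mathcal{B}^{\infty}_{\alpha+1}$ recorded just before the statement: since $(S_gf)'(z)=f'(z)g(z)$ and $(S_gf)(0)=0$, the quantity controlling $\|S_gf\|_{H^{\infty}_\beta}$ is again a radial integral, now of $f'g$ rather than $fg'$. For $f\in H^{\infty}_\alpha$ I will use $|f'(re^{i\theta})|\leq\|f\|_{\mathcal{B}^{\infty}_{\alpha+1}}/(1-r^2)^{\alpha+1}$ together with the equivalence $\|f\|_{\mathcal{B}^{\infty}_{\alpha+1}}\asymp\|f\|_{H^{\infty}_\alpha}$ coming from that identification; this is exactly where $\alpha>0$ enters (for $\alpha=0$ the weight is not normal and $H^{\infty}_0\neq\mathcal{B}$, which is why that case was disposed of separately in Corollary~\ref{cor3}).

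For the sufficiency direction I will estimate directly. Parametrizing radially and using the pointwise bound on $f'$ gives, for $z=Re^{i\theta}$,
\begin{equation*}
(1-R^2)^\beta\left|\int_0^{Re^{i\theta}}f'(\zeta)g(\zeta)\,d\zeta\right|\leq\|f\|_{\mathcal{B}^{\infty}_{\alpha+1}}(1-R^2)^\beta\int_0^R\frac{|g(re^{i\theta})|}{(1-r^2)^{\alpha+1}}\,dr.
\end{equation*}
Taking the supremum over $\theta$ and $R$ and splitting at the threshold $t_0$ provided by the assumed finiteness of the limsup, the part $R\leq t_0$ contributes a finite constant $M_{t_0}$ and the part $R>t_0$ is bounded by $N$, so that $\|S_g\|\leq M_{t_0}+N$. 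This is word-for-word the second half of Theorem~\ref{th1} read with $g$ and exponent $\alpha+1$, and it is valid for every $\beta\geq0$ because it only measures the weighted growth of $S_gf$.

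For the necessity direction, assuming the limsup equals $+\infty$, I will pick $t_{0,n}\to1^-$ and angles $\theta_n$ with $(1-t_{0,n}^2)^\beta\int_0^{t_{0,n}}|g(re^{i\theta_n})|/(1-r^2)^{\alpha+1}\,dr\geq n$, and then rebuild the harmonic machinery of Theorem~\ref{th1} around $g$ instead of $g'$. The hypothesis $\log(g)\in\mathcal{B}$ yields $|g'(z)|\leq K|g(z)|/(1-|z|^2)$, so with $F_n=-i\log(g\circ\psi_{\eta,\theta_n})$ the restriction of $F_n$ lies in $\mathcal{B}(\Omega^{1/2}_{\gamma,\theta_n})$ by Lemma~\ref{le2}, and Lemma~\ref{le1} with $\varepsilon=\pi/3$ produces $u_n,\widetilde u_n$ with $|\widetilde u_n|\leq C_2(\gamma,\eta)$ and $|\arg g(re^{i\theta_n})-u_n(\psi^{-1}_{\eta,\theta_n}(re^{i\theta_n}))|\leq\pi/3$ on $[r_\eta,1)$. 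Setting $h_n=e^{-i(u_n+i\widetilde u_n)}$, defining $f_n$ as in Theorem~\ref{th1}, and then $g_n(z)=\int_0^z f_n(\zeta)/(1-e^{-2\theta_n i}\zeta^2)\,d\zeta$, I will check that $\{g_n\}$ stays bounded in $H^{\infty}_\alpha$: from $g_n'(z)=f_n(z)/(1-e^{-2\theta_n i}z^2)$ and $|1-e^{-2\theta_n i}z^2|\geq 1-|z|^2$ one obtains $\|g_n\|_{\mathcal{B}^{\infty}_{\alpha+1}}\leq\|f_n\|_{H^{\infty}_\alpha}\leq e^{C_2(\gamma,\eta)}$.

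The decisive computation is the lower bound for $\|S_g g_n\|_{H^{\infty}_\beta}$ obtained by evaluating at $z=t_{0,n}e^{i\theta_n}$. On that radius $1-e^{-2\theta_n i}z^2=1-r^2$, so the integrand $g_n'(re^{i\theta_n})g(re^{i\theta_n})$ has modulus $e^{\widetilde u_n}|g(re^{i\theta_n})|/(1-r^2)^{\alpha+1}$ and phase $\arg g(re^{i\theta_n})-u_n(\psi^{-1}_{\eta,\theta_n}(re^{i\theta_n}))$, whose cosine is at least $\cos(\pi/3)$ on $[r_\eta,1)$. Taking real parts, splitting off the bounded $[0,r_\eta]$ piece exactly as in Theorem~\ref{th1}, and using $e^{\widetilde u_n}\geq e^{-C_2(\gamma,\eta)}$ yields $\|S_g g_n\|_{H^{\infty}_\beta}\geq\cos(\pi/3)e^{-C_2(\gamma,\eta)}n-\mathrm{const}$, forcing $S_g$ to be unbounded as $n\to\infty$. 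I expect the only genuinely new point, and hence the main obstacle, to be the uniform $H^{\infty}_\alpha$-bound for the integrated test functions $g_n$; once the elementary estimate $|1-e^{-2\theta_n i}z^2|\geq1-|z|^2$ absorbs the extra power of $(1-r^2)$ introduced by the integration, the remainder of the argument is the already-verified template of Theorem~\ref{th1}.
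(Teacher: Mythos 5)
Your proposal is correct and follows exactly the route the paper itself indicates: the paper gives no written-out proof of Theorem~\ref{th2}, but its preamble specifies precisely the two ingredients you use, namely the identification $H^{\infty}_\alpha=\mathcal{B}^{\infty}_{\alpha+1}$ for $\alpha>0$ and the integrated test functions $g_n(z)=\int_0^z f_n(\zeta)(1-e^{-2\theta_n i}\zeta^2)^{-1}d\zeta$, after which the argument is the Theorem~\ref{th1} template with $g'$ replaced by $g$ and $\alpha$ by $\alpha+1$. Your verification that $\|g_n\|_{\mathcal{B}^{\infty}_{\alpha+1}}\leq\|f_n\|_{H^{\infty}_{\alpha}}$ via $|1-e^{-2\theta_n i}z^2|\geq 1-|z|^2$ supplies the one detail the paper leaves implicit, and it is correct.
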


The following corollary is the most interesting part of Theorem~\ref{th2}.
\begin{corollary}\label{cor4}
Let $g\in H(\mathbb{D})$ such that $\log(g)\in \mathcal{B}$ and $0<\alpha$\,. Then $S_g: H^{\infty}_\alpha\rightarrow H^{\infty}_0$ is bounded if and only if
$$\sup_{0\leq\theta<2\pi}\int_0^1\frac{|g(re^{i\theta})|}{(1-r^2)^{\alpha+1}}dr<+\infty\,.$$
\end{corollary}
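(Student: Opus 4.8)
The plan is to deduce Corollary~\ref{cor4} directly from Theorem~\ref{th2} by specializing to $\beta=0$, exactly as Corollary~\ref{cor1} was derived from Theorem~\ref{th1}. First I would apply Theorem~\ref{th2} with $\beta=0$, which tells us that $S_g:H^{\infty}_\alpha\rightarrow H^{\infty}_0$ is bounded if and only if
$$\limsup_{t\rightarrow1^-}\sup_{0\leq\theta<2\pi}\int_0^t\frac{|g(re^{i\theta})|}{(1-r^2)^{\alpha+1}}dr<+\infty\,.$$
Thus the only thing left to verify is that this $\limsup$--$\sup$ expression coincides with the single supremum $\sup_{0\leq\theta<2\pi}\int_0^1\frac{|g(re^{i\theta})|}{(1-r^2)^{\alpha+1}}dr$ appearing in the statement.

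The key observation driving this reduction is monotonicity: since the integrand $\frac{|g(re^{i\theta})|}{(1-r^2)^{\alpha+1}}$ is nonnegative, for each fixed $\theta$ the inner integral $\Phi(t,\theta):=\int_0^t\frac{|g(re^{i\theta})|}{(1-r^2)^{\alpha+1}}dr$ is nondecreasing in $t$. Consequently $\sup_{\theta}\Phi(t,\theta)$ is itself nondecreasing in $t$, so its $\limsup$ as $t\rightarrow1^-$ is just its supremum over $t\in(0,1)$. I would then interchange the two suprema (a supremum over a product set equals the iterated supremum) to obtain
$$\limsup_{t\rightarrow1^-}\sup_{0\leq\theta<2\pi}\Phi(t,\theta)=\sup_{0\leq\theta<2\pi}\sup_{0<t<1}\Phi(t,\theta)=\sup_{0\leq\theta<2\pi}\int_0^1\frac{|g(re^{i\theta})|}{(1-r^2)^{\alpha+1}}dr\,,$$
where the last equality again uses that, for fixed $\theta$, $\lim_{t\rightarrow1^-}\Phi(t,\theta)=\int_0^1\frac{|g(re^{i\theta})|}{(1-r^2)^{\alpha+1}}dr$. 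This chain of equalities converts the limsup criterion of Theorem~\ref{th2} into the stated supremum criterion and completes the argument.

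There is essentially no hard analytic step here, as the substance has already been absorbed into Theorem~\ref{th2}; the whole content is the elementary interchange of $\limsup_{t\rightarrow1^-}$ and $\sup_\theta$. The only point demanding a little care---and the closest thing to an obstacle---is to confirm that the interchange remains valid even when the common value is $+\infty$, so that the equivalence between boundedness and finiteness is preserved in both directions. Monotonicity in $t$ handles exactly this, since it turns the $\limsup$ into a genuine increasing limit and rules out any oscillation that could separate the two expressions.
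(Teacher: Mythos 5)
Your proposal is correct and matches the paper's intent exactly: the paper offers no written proof for this corollary, but it is plainly meant to follow from Theorem~\ref{th2} with $\beta=0$ via the same monotonicity argument used to derive Corollary~\ref{cor1} from Theorem~\ref{th1}, which is precisely what you do. The interchange of $\limsup_{t\to 1^-}$ and $\sup_\theta$ for a nonnegative integrand increasing in $t$ is the whole content, and you justify it properly, including the case where the common value is $+\infty$.
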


If $\beta>0$, then we can give the following complete characterization of the boundedness of $S_g: H^{\infty}_\alpha\rightarrow H^{\infty}_\beta$ for the general symbol $g\in H(\mathbb{D})$\,.
\begin{theorem}\label{th3}
If $g\in H(\mathbb{D})$ and $0\leq\alpha,0<\beta$, then $S_g: H^{\infty}_\alpha\rightarrow H^{\infty}_\beta$ is bounded if and only if
$$\sup_{z\in\mathbb{D}}(1-|z|^2)^{\beta-\alpha}|g(z)|<\infty\,.$$
\end{theorem}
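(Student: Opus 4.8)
The plan is to reduce everything to a pointwise statement about the derivative of $S_g f$, exploiting that $(S_g f)'(z) = f'(z)g(z)$ and that $S_g f$ vanishes at the origin. Since $\beta>0$, the weight $(1-|z|^2)^\beta$ is normal and $H^\infty_\beta = \mathcal{B}^\infty_{\beta+1}$ with equivalent norms; concretely, for every $h$ with $h(0)=0$ one has the two-sided estimate
\[
\sup_{z\in\mathbb{D}}(1-|z|^2)^{\beta}|h(z)| \asymp \sup_{z\in\mathbb{D}}(1-|z|^2)^{\beta+1}|h'(z)|,
\]
where the upper bound $\lesssim$ comes from writing $h(z)=\int_0^1 h'(tz)z\,dt$ and using $\beta>0$ to make $\int_0^1 (1-t^2|z|^2)^{-(\beta+1)}\,dt \lesssim (1-|z|^2)^{-\beta}$, while the lower bound $\gtrsim$ is the Cauchy estimate on the disk of radius $(1-|z|)/2$ about $z$. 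Applying this to $h=S_g f$ shows that $S_g\colon H^\infty_\alpha\to H^\infty_\beta$ is bounded if and only if $\sup_{z}(1-|z|^2)^{\beta+1}|f'(z)||g(z)| \lesssim \|f\|_{H^\infty_\alpha}$ for all $f$. I would pair this with the sharp derivative bound $\sup_z(1-|z|^2)^{\alpha+1}|f'(z)| \lesssim \|f\|_{H^\infty_\alpha}$, valid for every $\alpha\geq0$ by the same Cauchy estimate.

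For sufficiency, assume $M:=\sup_z(1-|z|^2)^{\beta-\alpha}|g(z)|<\infty$. Then for $f\in H^\infty_\alpha$,
\[
(1-|z|^2)^{\beta+1}|f'(z)||g(z)| \leq M\,(1-|z|^2)^{\alpha+1}|f'(z)| \lesssim M\,\|f\|_{H^\infty_\alpha},
\]
and taking the supremum over $z$ together with the displayed norm equivalence gives $\|S_g f\|_{H^\infty_\beta}\lesssim M\|f\|_{H^\infty_\alpha}$, so $S_g$ is bounded.

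For necessity, I would test against the family, for $a\in\mathbb{D}$,
\[
f_a(z)=\frac{(1-|a|^2)^{\alpha}(a-z)}{(1-\overline{a}z)^{2\alpha+1}}.
\]
Using $|(a-z)/(1-\overline a z)|\le1$ and $(1-|a|^2)(1-|z|^2)\le|1-\overline a z|^2$ one checks $\|f_a\|_{H^\infty_\alpha}\le1$ for all $\alpha\ge0$, while the first-order term drops out at $z=a$ and yields $|f_a'(a)|=(1-|a|^2)^{-\alpha-1}$ with no degeneracy (at $\alpha=0$ this is just the disk automorphism $\varphi_a$). Plugging $f_a$ into the pointwise inequality above and evaluating at $z=a$ gives
\[
(1-|a|^2)^{\beta-\alpha}|g(a)| = (1-|a|^2)^{\beta+1}|f_a'(a)||g(a)| \lesssim \|S_g\|\,\|f_a\|_{H^\infty_\alpha} \le \|S_g\|,
\]
and taking the supremum over $a$ produces the desired condition.

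The delicate points are twofold. First, the norm equivalence $H^\infty_\beta \asymp \mathcal{B}^\infty_{\beta+1}$ genuinely requires $\beta>0$: for $\beta=0$ the integral $\int_0^1(1-t^2|z|^2)^{-1}\,dt$ diverges logarithmically and $\mathcal{B}^\infty_1=\mathcal{B}\supsetneq H^\infty_0$, which is exactly why this theorem is stated only for $\beta>0$ and why its general-symbol conclusion differs from the $\log g\in\mathcal{B}$ description of Theorem~\ref{th2}. Second, on the $\alpha$-side I must use a test function whose derivative at $a$ is comparable to $(1-|a|^2)^{-\alpha-1}$ uniformly; the naive pure-power function $(1-|a|^2)^\alpha/(1-\overline a z)^{2\alpha}$ has derivative proportional to $2\alpha|a|$ and degenerates both at $\alpha=0$ and at $a=0$, so the key device is the $(a-z)$ factor in $f_a$, which forces the unwanted term to vanish at $z=a$ and keeps the estimate uniform down to $\alpha=0$. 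Everything else is a routine normal-weight computation, so I expect the construction of this uniform test family to be the main technical point.
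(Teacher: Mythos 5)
Your proof is correct, but it takes a different route from the paper. Both arguments begin the same way: since $\beta>0$, one has $H^\infty_\beta=\mathcal{B}^\infty_{\beta+1}$ with equivalent norms, so via $(S_gf)'=f'g$ and $(S_gf)(0)=0$ the boundedness of $S_g$ is equivalent to that of $M_gD\colon H^\infty_\alpha\rightarrow H^\infty_{\beta+1}$. At that point the paper simply invokes Theorem~7 of Manhas and Zhao \cite{MZ} (with $\varphi(z)\equiv z$) and stops, whereas you prove the resulting multiplier characterization from scratch: the sufficiency via the Cauchy estimate $\sup_z(1-|z|^2)^{\alpha+1}|f'(z)|\lesssim\|f\|_{H^\infty_\alpha}$, and the necessity via the test family $f_a(z)=(1-|a|^2)^{\alpha}(a-z)(1-\overline{a}z)^{-(2\alpha+1)}$, whose $(a-z)$ factor is exactly the right device to get $|f_a'(a)|=(1-|a|^2)^{-\alpha-1}$ uniformly in $a$ and down to $\alpha=0$. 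All the individual steps check out: the two-sided norm equivalence genuinely needs $\beta>0$ for the integral $\int_0^1(1-t^2|z|^2)^{-(\beta+1)}dt\lesssim(1-|z|^2)^{-\beta}$, the bound $\|f_a\|_{H^\infty_\alpha}\le1$ follows from $(1-|a|^2)(1-|z|^2)\le|1-\overline{a}z|^2$ together with $|a-z|\le|1-\overline{a}z|$, and the evaluation at $z=a$ kills the unwanted term in $f_a'$. What your version buys is a self-contained, elementary proof that makes the role of the hypothesis $\beta>0$ and the uniformity in $\alpha$ completely transparent; what the paper's version buys is brevity and consistency with its companion result on compactness (Theorem~\ref{th5}), which cites Theorem~8 of the same reference. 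Your test-function construction is essentially what sits inside the cited result, so the two arguments are mathematically the same theorem seen from inside and outside.
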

\begin{proof}
If $0<\beta$, then $H^{\infty}_\beta=\mathcal{B}^{\infty}_{\beta+1}$\,. Thus the boundedness of $S_g: H^{\infty}_\alpha\rightarrow H^{\infty}_\beta$ is equivalent to the boundedness of $M_gD: H^{\infty}_\alpha\rightarrow H^{\infty}_{\beta+1}$, where $M_g$ is the multiplication operator and $D$ is the differentiation operator, respectively. Now by choosing $\varphi(z)\equiv z$ in \cite[Theorem~7]{MZ}, the boundedness of $M_gD: H^{\infty}_\alpha\rightarrow H^{\infty}_{\beta+1}$ is equivalent to the following condition
$$\sup_{z\in\mathbb{D}}(1-|z|^2)^{\beta-\alpha}|g(z)|<\infty\,.$$
\end{proof}

\section{\bf The compactness of Volterra type operators}
In this section, we firstly characterize the compactness of $T_g: H^{\infty}_\alpha\rightarrow H^{\infty}_\beta$ and then consider the corresponding questions of its companion operator $S_g: H^{\infty}_\alpha\rightarrow H^{\infty}_\beta$\,. For $0\leq\alpha$, we introduce the notation:
$$\overline{B}:=\ \{f\in H^{\infty}_\alpha:\ \|f\|_{H^{\infty}_\alpha}\leq1\}$$

By standard arguments, we prove the following lemma.
\begin{lemma}\label{le3}
Let $0\leq\alpha,\beta$. If  $T_g: H^{\infty}_\alpha\rightarrow H^{\infty}_\beta$ is bounded, then the following two statements are equivalent:

(1) $T_g: H^{\infty}_\alpha\rightarrow H^{\infty}_\beta$ is compact;

(2) For  any bounded sequence $\{f_n\}_{n=1}^\infty\subset H^{\infty}_\alpha$ such that $f_n(z)\rightarrow0$ uniformly on any compact subset of $\mathbb{D}$ as $n\rightarrow\infty$, it holds that $\|T_g(f_n)\|_{H^{\infty}_\beta}\rightarrow0$ as $n\rightarrow\infty$.
\end{lemma}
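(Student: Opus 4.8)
The plan is to prove this standard compactness criterion by a normal-families (Montel) argument, resting on two elementary observations about the norms involved. First, convergence in the $H^{\infty}_\beta$ norm forces locally uniform convergence on $\mathbb{D}$: if $h_k\to h$ in $H^{\infty}_\beta$, then for each fixed $z$ one has $(1-|z|^2)^\beta|h_k(z)-h(z)|\leq\|h_k-h\|_{H^{\infty}_\beta}\to0$, so $h_k(z)\to h(z)$ pointwise (and uniformly on compact subsets, since $(1-|z|^2)^\beta$ is bounded below there). Second, the unit ball $\overline{B}$ is a normal family: on any disk $|z|\leq r<1$ a function $f\in\overline{B}$ satisfies $|f(z)|\leq\|f\|_{H^{\infty}_\alpha}/(1-r^2)^\alpha$, so a bounded subset of $H^{\infty}_\alpha$ is locally uniformly bounded, and Montel's theorem applies. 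I would record these two facts at the outset and then treat the two implications separately.

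For $(1)\Rightarrow(2)$, I would argue by contradiction. Suppose $\{f_n\}$ is bounded in $H^{\infty}_\alpha$ with $f_n\to0$ uniformly on compact subsets, but $\|T_g(f_n)\|_{H^{\infty}_\beta}\not\to0$; then there is an $\varepsilon>0$ and a subsequence with $\|T_g(f_{n_k})\|_{H^{\infty}_\beta}\geq\varepsilon$. Since $T_g$ is compact and $\{f_{n_k}\}$ is bounded, a further subsequence of $\{T_g(f_{n_k})\}$ converges in the $H^{\infty}_\beta$ norm to some $h\in H^{\infty}_\beta$. On the other hand, for each fixed $z$ the path from $0$ to $z$ is compact, $g'$ is bounded on it, and $f_{n_k}\to0$ uniformly there, so $(T_gf_{n_k})(z)=\int_0^z f_{n_k}(\omega)g'(\omega)\,d\omega\to0$ pointwise. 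By the first observation the norm limit $h$ must coincide with this pointwise limit, forcing $h\equiv0$ and hence $\|T_g(f_{n_k})\|_{H^{\infty}_\beta}\to0$, which contradicts the lower bound $\varepsilon$.

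For $(2)\Rightarrow(1)$, I would show that every sequence in $\overline{B}$ has a subsequence whose $T_g$-image converges in $H^{\infty}_\beta$. Given $\{f_n\}\subset\overline{B}$, normality yields a subsequence $\{f_{n_k}\}$ converging uniformly on compact subsets to some $f\in H(\mathbb{D})$; passing to the limit in $(1-|z|^2)^\alpha|f_{n_k}(z)|\leq1$ shows $f\in\overline{B}$. The differences $f_{n_k}-f$ then form a bounded sequence in $H^{\infty}_\alpha$ tending to $0$ uniformly on compact subsets, so hypothesis $(2)$ gives $\|T_g(f_{n_k})-T_g(f)\|_{H^{\infty}_\beta}=\|T_g(f_{n_k}-f)\|_{H^{\infty}_\beta}\to0$, where $T_g(f)\in H^{\infty}_\beta$ because $T_g$ is bounded. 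Thus $\{T_g(f_{n_k})\}$ converges in $H^{\infty}_\beta$, which is exactly the sequential compactness of $T_g(\overline{B})$.

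The steps are all routine once the framework is set; the only point demanding care is the interface between the two modes of convergence. The main obstacle is making precise that $H^{\infty}_\beta$-norm convergence implies pointwise convergence and that the Montel limit remains in $\overline{B}$, since these are what let me identify limits and apply hypothesis $(2)$ to the differences $f_{n_k}-f$. Everything else reduces to the contradiction argument in one direction and the extraction of a convergent subsequence in the other.
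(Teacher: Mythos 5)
Your proposal is correct and follows essentially the same route as the paper's proof: Montel's theorem on the unit ball for $(2)\Rightarrow(1)$, applying hypothesis $(2)$ to the differences $f_{n_k}-f$, and for $(1)\Rightarrow(2)$ identifying the norm limit of $T_g(f_{n_k})$ with the locally uniform limit $0$. Your explicit contradiction framing in $(1)\Rightarrow(2)$ is a slightly cleaner way of packaging the paper's "it suffices to pass to a subsequence" step, but the underlying argument is the same.
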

\begin{proof}
Assume that (2) holds. Let $\{k_n\}_{n=1}^\infty\subset\overline{B}$, then $\{k_n\}$ is a normal family and thus, there exists $k\in H^{\infty}_\alpha$ and a subsequence $\{k_{n_j}\}_{j=1}^\infty$ such that $k_{n_j}\rightarrow k$ uniformly on any compact subset of $\mathbb{D}$ as $j\rightarrow\infty$. Let $\{f_j\}_{j=1}^\infty=\{k_{n_j}-k\}_{j=1}^\infty$. By (2), $\|T_g(f_j)\|_{H^{\infty}_\beta}=\|T_g(k_{n_j})-T_g(k)\|_{H^{\infty}_\beta}\rightarrow0$ as $j\rightarrow\infty$. Since by assumption $T_g: H^{\infty}_\alpha\rightarrow H^{\infty}_\beta$ is bounded, it follows that $T_g(k)\in H^{\infty}_\beta$. Accordingly, $T_g(k_{n_j})\rightarrow T_g(k)$ in $H^{\infty}_\beta$ as $j\rightarrow\infty$, which implies that $T_g: H^{\infty}_\alpha\rightarrow H^{\infty}_\beta$ is compact.

Conversely, assume that $\{f_n\}_{n=1}^\infty\subset\overline{B}$, that $f_{n}\rightarrow 0$ uniformly on any compact subset of $\mathbb{D}$ as $n\rightarrow\infty$, and that $T_g: H^{\infty}_\alpha\rightarrow H^{\infty}_\beta$ is compact. To prove (2), it suffices to show that there exists a subsequence $\{f_{n_j}\}_{j=1}^\infty$ such that $f_{n_j}\rightarrow 0$ as $j\rightarrow\infty$. It is easy to show that $T_g(f_n)\rightarrow0$ uniformly on any compact subset of $\mathbb{D}$ as $n\rightarrow\infty$. Since $T_g: H^{\infty}_\alpha\rightarrow H^{\infty}_\beta$ is compact, there is a subsequence $\{f_{n_j}\}_{j=1}^\infty$ and $f\in H^{\infty}_\beta$ such that $\|T_g(f_{n_j})-f\|_{H^{\infty}_\beta}\rightarrow 0$ as $j\rightarrow\infty$. Since $T_g(f_{n_j})\rightarrow0$ uniformly on any compact subset of $\mathbb{D}$ as $j\rightarrow\infty$, it follows that $f=0$ and hence $\|T_g(f_{n_j})\|_{H^{\infty}_\beta}\rightarrow 0$ as $j\rightarrow\infty$. This completes the proof.
\end{proof}

\begin{theorem}\label{th4}
Let $g\in H(\mathbb{D})$ such that $\log(g')\in \mathcal{B}$ and $0\leq\alpha,\beta$. Then $T_g: H^{\infty}_\alpha\rightarrow H^{\infty}_\beta$ is compact if and only if
$$\lim_{t_2\rightarrow1^-}\limsup_{t_1\rightarrow1^-}\sup_{0\leq\theta<2\pi}(1-t^2_1)^\beta\int_{t_2}^{t_1}\frac{|g'(re^{i\theta})|}{(1-r^2)^\alpha}dr=0\,.$$
\end{theorem}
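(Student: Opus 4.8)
The plan is to route both implications through Lemma~\ref{le3}, so that compactness of $T_g\colon H^{\infty}_\alpha\to H^{\infty}_\beta$ is tested against bounded sequences $\{f_n\}\subset H^{\infty}_\alpha$ with $f_n\to0$ uniformly on compacta. Sufficiency then becomes the claim that $\|T_g(f_n)\|_{H^{\infty}_\beta}\to0$ for every such sequence, and necessity becomes the construction, under negation of the condition, of one such sequence along which $\|T_g(f_n)\|_{H^{\infty}_\beta}$ stays bounded away from $0$.

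For sufficiency I would fix $\varepsilon>0$ and use the hypothesis to produce $\rho\in(0,1)$ and then $\rho'\in(\rho,1)$ with $(1-R^2)^\beta\int_\rho^R\frac{|g'(re^{i\theta})|}{(1-r^2)^\alpha}\,dr<\varepsilon$ for all $\theta$ and all $R\in(\rho',1)$. Writing $\|T_g(f_n)\|_{H^{\infty}_\beta}$ as a supremum over $\theta$ and $R$ of $(1-R^2)^\beta\big|\int_0^R f_n(re^{i\theta})g'(re^{i\theta})\,dr\big|$, I would handle $R\le\rho'$ by bounding the integral by $\big(\sup_{|z|\le\rho'}|f_n(z)|\big)\int_0^{\rho'}\max_{|\zeta|\le\rho'}|g'(\zeta)|\,dr$, which tends to $0$ by local uniform convergence and the boundedness of $g'$ on compacta. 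For $R>\rho'$ I would split $\int_0^R=\int_0^\rho+\int_\rho^R$: the first piece vanishes as above, while the second is at most $\varepsilon\,\|f_n\|_{H^{\infty}_\alpha}$ by the chosen smallness. This yields $\limsup_n\|T_g(f_n)\|_{H^{\infty}_\beta}\le\varepsilon$, and $\varepsilon$ is arbitrary. The estimate simply refines the bounded direction of Theorem~\ref{th1}, with local uniform convergence doing the job that the crude constant $M_{t_0}$ did there.

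For necessity I would argue by contraposition. Negating the condition (the inner $\limsup$ is nonincreasing in $t_2$, so its limit is its infimum) yields $\delta>0$ with $\limsup_{t_1\to1^-}\sup_\theta(1-t_1^2)^\beta\int_{t_2}^{t_1}\frac{|g'(re^{i\theta})|}{(1-r^2)^\alpha}\,dr\ge\delta$ for every $t_2<1$. I would pick $t_{2,n}\to1^-$, then $t_{1,n}\in(t_{2,n},1)$ and angles $\theta_n$ realizing the localized lower bound $\ge\delta/2$. The test functions are those of Theorem~\ref{th1} (built from Lemmas~\ref{le1} and \ref{le2} with $\varepsilon=\pi/3$), but damped by a vanishing factor to force local uniform convergence: set $\tilde f_n(z)=\big(\tfrac{1+e^{-i\theta_n}z}{2}\big)^{N_n}f_n(z)$ with $N_n\to\infty$. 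Since $\big|\tfrac{1+e^{-i\theta_n}z}{2}\big|<1$ on $\mathbb{D}$, this factor preserves $\|\tilde f_n\|_{H^{\infty}_\alpha}\le e^{C_2(\gamma,\eta)}$ and, being at most $\big(\tfrac{1+\rho}{2}\big)^{N_n}$ on $\{|z|\le\rho\}$, sends $\tilde f_n\to0$ locally uniformly. On the ray $z=re^{i\theta_n}$ the factor equals the positive real number $\big(\tfrac{1+r}{2}\big)^{N_n}$, so the phase–alignment computation of Theorem~\ref{th1} survives unchanged and gives $\|T_g(\tilde f_n)\|_{H^{\infty}_\beta}\ge c\,\big(\tfrac{1+t_{2,n}}{2}\big)^{N_n}\tfrac{\delta}{2}-E_n$, where $E_n$ is the contribution of $\int_0^{t_{2,n}}$.

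The delicate point, and the step I expect to cost the most, is the balancing of $N_n$ against the error $E_n$. The main term survives only if $\big(\tfrac{1+t_{2,n}}{2}\big)^{N_n}$ stays bounded below, which forces $N_n(1-t_{2,n})=O(1)$, while local uniform convergence forces $N_n\to\infty$; both are compatible since $t_{2,n}\to1$. When $\beta>0$ the error is easily defeated: the $\limsup$ lets me take $t_{1,n}\uparrow1$ freely, and choosing it so that $\big(\tfrac{1-t_{1,n}^2}{1-t_{2,n}^2}\big)^\beta\to0$ kills $E_n$ through the boundedness bound of Theorem~\ref{th1} applied at level $t_{2,n}$. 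The genuinely hard regime is $\beta=0$, where no such weight ratio exists and the damping factor is still $\approx1$ on the thin boundary layer just below $t_{2,n}$, so that a~priori the tail mass there can rival the main term. This is precisely where I expect the hypothesis $\log(g')\in\mathcal{B}$ to be indispensable: the resulting estimate $|g''|\le K|g'|/(1-|z|^2)$ makes $|g'(re^{i\theta})|$ slowly varying along each ray, which should prevent the integral from concentrating in a layer of width $o(1-t_{2,n})$ and allow a judicious choice of $N_n$ (and of $t_{2,n}$ at a radius where the radial mass is small) to render $E_n$ negligible. Making this last estimate quantitative is the crux of the argument.
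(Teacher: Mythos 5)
Your sufficiency argument is, up to notation, the paper's own: the same routing through Lemma~\ref{le3}, the same two-parameter choice of radii from the hypothesis, and the same split of the integral into a compact part (killed by local uniform convergence of $f_n$) and a tail part (killed by the smallness hypothesis, uniformly in $\theta$). That half is correct and complete.

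The necessity half has a genuine gap, and it is exactly the step you yourself flag as the crux. You lower-bound the norm by a main term over $[t_{2,n},t_{1,n}]$ minus $E_n$, the \emph{entire} contribution of $\int_0^{t_{2,n}}$, and you are then correctly stuck when $\beta=0$: there $E_n$ is only controlled by the global boundedness constant $N$ of Theorem~\ref{th1}, which can dwarf a main term of size $\delta$. But this obstruction is an artifact of your decomposition, not a real difficulty, and the slow-variation argument you hope to extract from $\log(g')\in\mathcal{B}$ would not help --- the mass of $|g'|/(1-r^2)^\alpha$ on $[r_\eta,t_{2,n}]$ genuinely can be large; what saves the argument is its \emph{sign}, not its size. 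Lemma~\ref{le1} gives the phase alignment $|\arg g'(re^{i\theta_n})-u_n(\psi^{-1}_{\eta,\theta_n}(re^{i\theta_n}))|\le\pi/3$ on the whole interval $[r_\eta,1)$ for a \emph{fixed} $r_\eta<1$ independent of $n$, so along the ray the real part of $f_n g'$ times the (positive) damping factor is pointwise
$$\geq \tfrac12\,e^{-C_2(\gamma,\eta)}\,\frac{|g'(re^{i\theta_n})|}{(1-r^2)^\alpha}\cdot(\text{damping})\ \geq\ 0 \quad\text{on } [r_\eta,1)\,.$$
Consequently $\mathrm{Re}\int_{r_\eta}^{t_{1,n}}\geq \mathrm{Re}\int_{t_{2,n}}^{t_{1,n}}$: the piece over $[r_\eta,t_{2,n}]$ has the same sign as the main term and is simply discarded, never subtracted. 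The only piece that must be subtracted is $\int_0^{r_\eta}$, which lives on a fixed compact set and is annihilated by the damping factor as the exponent tends to infinity. With this decomposition, the paper's test functions $s_n(z)=f_n(z)z^{k_n}$, with $k_n$ the largest integer satisfying $t_{2,n}^{k_n}\geq\tfrac13$ (your factor $((1+e^{-i\theta_n}z)/2)^{N_n}$ with $N_n(1-t_{2,n})=O(1)$ would serve equally well), close the argument uniformly for all $\beta\geq0$, with no input from $\log(g')\in\mathcal{B}$ beyond what was already used to construct $f_n$.
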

\begin{proof}
Assume first that
$$\lim_{t_2\rightarrow1^-}\limsup_{t_1\rightarrow1^-}\sup_{0\leq\theta<2\pi}(1-t^2_1)^\beta\int_{t_2}^{t_1}\frac{|g'(re^{i\theta})|}{(1-r^2)^\alpha}dr=0\,.$$
To prove that $T_g: H^{\infty}_\alpha\rightarrow H^{\infty}_\beta$ is compact, choose any sequence $\{f_n\}_{n=1}^\infty\subset H^{\infty}_{\alpha}$ such that there exists a positive number $W$ such that $\sup_{n\in \mathbb{N}}\|f_n\|_{H^{\infty}_{\alpha}}\leq W$ and $f_n\rightarrow 0$ uniformly on any compact subset of $\mathbb{D}$ as $n\rightarrow\infty$\,. Let $\epsilon>0$, then there exists $t_{2,\epsilon}$ with $0<t_{2,\epsilon}<1$ such that
$$\limsup_{t_1\rightarrow1^-}\sup_{0\leq\theta<2\pi}(1-t^2_1)^\beta\int_{t_{2,\epsilon}}^{t_1}\frac{|g'(re^{i\theta})|}{(1-r^2)^\alpha}dr<\epsilon\,.$$
Moreover, there exists $t_{1,\epsilon}$ with $t_{2,\epsilon}<t_{1,\epsilon}<1$ such that
$$\sup_{0\leq\theta<2\pi}(1-t^2_1)^\beta\int_{t_{2,\epsilon}}^{t_1}\frac{|g'(re^{i\theta})|}{(1-r^2)^\alpha}dr<2\epsilon,\quad \text{whenever } t_{1,\epsilon}<t_1<1\,.$$

Now choose $N_\epsilon>0$ such that
$$\sup_{|z|\leq t_{1,\epsilon}}|f_n(z)|<\frac{\epsilon}{M_{1,\epsilon}\cdot t_{1,\epsilon}},\quad \text{whenever } n>N_\epsilon\,,$$
where $M_{1,\epsilon}:=\sup_{0\leq|z|\leq t_{1,\epsilon}}\left\{|g'(z)|\right\}$\,.

Then for $n>N_\epsilon$, we have
\begin{equation}\begin{split}\nonumber
\|T_g(f_n)\|_{H^{\infty}_{\beta}}&=\sup_{0\leq\theta<2\pi}\sup_{0\leq R<1}\left\{(1-R^2)^\beta\left|\int_0^{R}f_n(re^{i\theta})g'(re^{i\theta})e^{i\theta}dr\right|\right\}\\
&\leq\sup_{0\leq\theta<2\pi}\sup_{0\leq R<1}\left\{(1-R^2)^\beta\int_0^{R}|f_n(re^{i\theta})||g'(re^{i\theta})|dr\right\}\\
&\leq\sup_{0\leq\theta<2\pi}\sup_{0\leq R\leq t_{1,\epsilon}}\left\{(1-R^2)^\beta\int_0^{R}|f_n(re^{i\theta})||g'(re^{i\theta})|dr\right\}\\
&\phantom{\leq}+\sup_{0\leq\theta<2\pi}\sup_{t_{1,\epsilon}<R}\left\{(1-R^2)^\beta\int_0^{t_{2,\epsilon}}|f_n(re^{i\theta})||g'(re^{i\theta})|dr\right\}\\
&\phantom{\leq}+\sup_{0\leq\theta<2\pi}\sup_{t_{1,\epsilon}<R}\left\{(1-R^2)^\beta\int_{t_{2,\epsilon}}^R|f_n(re^{i\theta})||g'(re^{i\theta})|dr\right\}\\
&\leq \epsilon+\epsilon+2W\epsilon=2(1+W)\epsilon\,,
\end{split}\end{equation}
which implies that $\lim_{n\rightarrow\infty}\|T_g(f_n)\|_{H^{\infty}_{\beta}}=0$ and hence, by Lemma~\ref{le3}, $T_g: H^{\infty}_\alpha\rightarrow H^{\infty}_\beta$ is compact.

Conversely, suppose that $T_g: H^{\infty}_\alpha\rightarrow H^{\infty}_\beta$ is compact and so it is bounded.
If
$$c:=\lim_{t_2\rightarrow1^-}\limsup_{t_1\rightarrow1^-}\sup_{0\leq\theta<2\pi}(1-t^2_1)^\beta\int_{t_2}^{t_1}\frac{|g'(re^{i\theta})|}{(1-r^2)^\alpha}dr>0\,,$$
then by Theorem~\ref{th1}, we see that $c<+\infty$\,.

Let $\{t_{2,n}\}_{n=1}^\infty$ be a sequence such that $0<t_{2,n}<t_{2,n+1}<1$, $\lim_{n\rightarrow\infty}t_{2,n}=1$ and
\begin{equation}\begin{split}\nonumber
c&=\inf_{n\in\mathbb{N}}\limsup_{t_1\rightarrow1^-}\sup_{0\leq\theta<2\pi}(1-t^2_1)^\beta\int_{t_{2,n}}^{t_1}\frac{|g'(re^{i\theta})|}{(1-r^2)^\alpha}dr\\
&=\lim_{n\rightarrow\infty}\limsup_{t_1\rightarrow1^-}\sup_{0\leq\theta<2\pi}(1-t^2_1)^\beta\int_{t_{2,n}}^{t_1}\frac{|g'(re^{i\theta})|}{(1-r^2)^\alpha}dr\,.\\
\end{split}\end{equation}
Then for any $n\in \mathbb{N}$, we can choose $t_{1,n}$ with $t_{2,n}<t_{1,n}$ such that
\begin{equation}\begin{split}\nonumber
\sup_{0\leq\theta<2\pi}(1-t^2_{1,n})^\beta\int_{t_{2,n}}^{t_{1,n}}\frac{|g'(re^{i\theta})|}{(1-r^2)^\alpha}dr>c-\frac{1}{n^2}\,.
\end{split}\end{equation}
Also, we can choose an angle $\theta_n$ with $0\leq \theta_n<2\pi$ such that
\begin{equation}\begin{split}\nonumber
(1-t^2_{1,n})^\beta\int_{t_{2,n}}^{t_{1,n}}\frac{|g'(re^{i\theta_n})|}{(1-r^2)^\alpha}dr>c-\frac{1}{n}\,.
\end{split}\end{equation}

Define the sequence $\{k_n\}_{n=1}^\infty$ by choosing $k_n$ as the largest positive integer such that $t_{2,n}^{k_n}\geq\frac{1}{3}$ for any $n\in \mathbb{N}$, then since $\lim_{n\rightarrow\infty}\frac{\log\frac{1}{3}}{\log t_{2,n}}=+\infty$, it follows that $\lim_{n\rightarrow\infty}k_n=+\infty$\,.

Now define the sequence $\{s_n(z)\}_{n=1}^\infty$ by
$$s_n(z):=f_n(z)z^{k_n},\quad z\in\mathbb{D}\,,$$
where $f_n(z)$ is defined in the proof of Theorem~\ref{th1}, that is,
$$f_n(z):=\frac{h_n\circ\psi^{-1}_{\eta,\theta_n}(z)}{(1-e^{-2\theta_ni}z^2)^\alpha}\,.$$
Thus,
$$\|s_n\|_{H^{\infty}_{\alpha}}\leq e^{C_2(\gamma,\eta)},$$
and $s_n\rightarrow 0$ uniformly on any compact subset of $\mathbb{D}$\,.

Choosing $n$ large enough such that $t_{2,n}>r_\eta$, then
\begin{equation}\begin{split}\nonumber
\|T_g(s_n)\|_{H^{\infty}_{\beta}}&\geq\left|(1-t_{1,n}^2)^\beta\int_0^{t_{1,n}}f_n(re^{i\theta_n})g'(re^{i\theta_n})r^{k_n}dr\right|\\
&\geq\left|\text{Re}\left((1-t_{1,n}^2)^\beta\int_0^{t_{1,n}}f_n(re^{i\theta_n})g'(re^{i\theta_n})r^{k_n}dr\right)\right|\\
&\geq\left|\text{Re}\left((1-t_{1,n}^2)^\beta\int_{r_\eta}^{t_{1,n}}f_n(re^{i\theta_n})g'(re^{i\theta_n})r^{k_n}dr\right)\right|\\
&\phantom{\geq}-\left|\text{Re}\left((1-t_{1,n}^2)^\beta\int_0^{r_\eta}f_n(re^{i\theta_n})g'(re^{i\theta_n})r^{k_n}dr\right)\right|
\end{split}\end{equation}

First, we have
\begin{equation}\begin{split}\nonumber
\left|\text{Re}\left((1-t^2_{1,n})^\beta\int_0^{r_\eta}f_n(re^{i\theta_n})g'(re^{i\theta_n})r^{k_n}dr\right)\right|&\leq\int_0^{r_\eta}|f_n(re^{i\theta_n})||g'(re^{i\theta_n})|r^{k_n}dr\\
&\leq\|f_n\|_{H^{\infty}_{\alpha}}\int_0^{r_\eta}\frac{|g'(re^{i\theta_n})|}{(1-r^2)^\alpha}r^{k_n}dr\\
&\leq\frac{e^{C_2(\gamma,\eta)}C_3(\eta)}{(1-r^2_\eta)^\alpha}r^{k_n}_{\eta}\,,
\end{split}\end{equation}
where $C_3(\eta):=\sup_{|z|\leq r_\eta}\{|g'(z)|\}$\,.

Next, to estimate another integral, we obtain
\begin{equation}\begin{split}\nonumber
&\phantom{=\;}\left|\text{Re}\left((1-t^2_{1,n})^\beta\int_{r_\eta}^{t_{1,n}}f_n(re^{i\theta_n})g'(re^{i\theta_n})r^{k_n}dr\right)\right|\\
&=\left|(1-t^2_{1,n})^\beta\text{Re}\left(\int_{r_\eta}^{t_{1,n}}\frac{g'(re^{i\theta_n})}{(1-r^2)^\alpha}e^{-i\left(u_n(\psi^{-1}_{\eta,\theta_n}(re^{i\theta_n}))+i\widetilde{u}_n(\psi^{-1}_{\eta,\theta_n}(re^{i\theta_n}))\right)}r^{k_n}dr\right)\right|\\
&=\Bigg|(1-t^2_{1,n})^\beta\int_{r_\eta}^{t_{1,n}}\frac{|g'(re^{i\theta_n})|}{(1-r^2)^\alpha}e^{\widetilde{u}_n\left(\psi^{-1}_{\eta,\theta_n}(re^{i\theta_n})\right)}\times\\
&\phantom{=\;}\text{Re}\left(e^{i\left(\arg(g'(re^{i\theta_n}))-u_n(\psi^{-1}_{\eta,\theta_n}(re^{i\theta_n}))\right)}\right)r^{k_n}dr\Bigg|\\
&\geq\cos(\frac{\pi}{3})e^{-C_2(\gamma,\eta)}\left|(1-t^2_{1,n})^\beta\int_{r_\eta}^{t_{1,n}}\frac{|g'(re^{i\theta_n})|}{(1-r^2)^\alpha}r^{k_n}dr\right|\\
&\geq\cos(\frac{\pi}{3})r_{2,n}^{k_n}e^{-C_2(\gamma,\eta)}\left|(1-t^2_{1,n})^\beta\int_{t_{2,n}}^{t_{1,n}}\frac{|g'(re^{i\theta_n})|}{(1-r^2)^\alpha}dr\right|\\
&\geq\frac{1}{3}\cos(\frac{\pi}{3})e^{-C_2(\gamma,\eta)}(c-\frac{1}{n})\,.
\end{split}\end{equation}
Therefore, it follows that
\begin{equation}\begin{split}\nonumber
\limsup_{n\rightarrow\infty}\|T_g(s_n)\|_{H^{\infty}_{\beta}}&\geq\lim_{n\rightarrow\infty}\left(\frac{1}{3}\cos(\frac{\pi}{3})e^{-C_2(\gamma,\eta)}(c-\frac{1}{n})-\frac{e^{C_2(\gamma,\eta)}C_3(\eta)}{(1-r^2_\eta)^\alpha}r^{k_n}_{\eta}\right)\\
&=\frac{1}{3}\cos(\frac{\pi}{3})e^{-C_2(\gamma,\eta)}c>0\,,
\end{split}\end{equation}
which, according to Lemma~\ref{le3}, is in contradiction with the assumption that $T_g: H^{\infty}_\alpha\rightarrow H^{\infty}_\beta$ is compact.
Thus the proof is complete.
\end{proof}

\begin{remark}\label{re4}
From the proof of Theorem~\ref{th4}, we also see that the condition $$\lim_{t_2\rightarrow1^-}\limsup_{t_1\rightarrow1^-}\sup_{0\leq\theta<2\pi}(1-t^2_1)^\beta\int_{t_2}^{t_1}\frac{|g'(re^{i\theta})|}{(1-r^2)^\alpha}dr=0$$
is sufficient for the compactness of $T_g: H^{\infty}_\alpha\rightarrow H^{\infty}_\beta$ without the requirement that $\log(g')\in \mathcal{B}$.
\end{remark}
\begin{remark}\label{re5}
It should be noticed that Basallote et. al. \cite{BCHMP} had given the following complete characterization of the compactness of $T_g: H^{\infty}_\alpha\rightarrow H^{\infty}_\beta$ for the general symbol $g\in H(\mathbb{D})$ when $\beta>0$: $T_g: H^{\infty}_\alpha\rightarrow H^{\infty}_\beta$ is compact if and only if
$$\lim_{|z|\rightarrow1^-}(1-|z|^2)^{\beta+1-\alpha}|g'(z)|=0\,.$$
\end{remark}

\begin{corollary}\label{cor4}
Let $g\in H(\mathbb{D})$ such that $\log(g')\in \mathcal{B}$. Then the following statements are equivalent:

(1) $T_g: H^{\infty}_0\rightarrow H^{\infty}_0$ is compact;

(2) $T_g: \mathcal{A}\rightarrow \mathcal{A}$ is compact;

(3) $\lim_{t\rightarrow1^-}\sup_{0\leq\theta<2\pi}\int_t^1|g'(re^{i\theta})|dr=0\,.$
\end{corollary}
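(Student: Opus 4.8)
The plan is to prove $(1)\Leftrightarrow(3)$ first, since this reduces essentially to Theorem~\ref{th4}, and then to bring in $(2)$ through the relationship between $\mathcal{A}$ and $H^{\infty}_0$. For $(1)\Leftrightarrow(3)$ I would specialize Theorem~\ref{th4} to $\alpha=\beta=0$, so that $T_g\colon H^{\infty}_0\to H^{\infty}_0$ is compact if and only if
$$\lim_{t_2\to1^-}\limsup_{t_1\to1^-}\sup_{0\le\theta<2\pi}\int_{t_2}^{t_1}|g'(re^{i\theta})|\,dr=0\,.$$
The task is then to collapse this double limit into the single limit of $(3)$, exactly as in the passage from Theorem~\ref{th1} to Corollary~\ref{cor1}. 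Since the integrand is nonnegative, for each fixed $t_2$ the quantity $\sup_{\theta}\int_{t_2}^{t_1}|g'(re^{i\theta})|\,dr$ is nondecreasing in $t_1$, so the $\limsup$ over $t_1$ is an honest limit and equals $\sup_{t_1<1}\sup_\theta\int_{t_2}^{t_1}=\sup_\theta\int_{t_2}^{1}|g'(re^{i\theta})|\,dr$, after swapping the two suprema of a nonnegative increasing family. Letting $t_2\to1^-$ yields precisely the condition in $(3)$, and this part needs no boundary analysis at all.

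For the role of $\mathcal{A}$, I would first record that $(3)$ forces $g\in\mathcal{A}$: the estimate $|g(r_1e^{i\theta})-g(r_2e^{i\theta})|\le\int_{r_1}^{r_2}|g'(re^{i\theta})|\,dr$ together with $(3)$ shows that the radial limits of $g$ exist uniformly in $\theta$, whence $g$ extends continuously to $\overline{\mathbb D}$; the same estimate applied to $T_gf$ shows moreover that $T_g(H^{\infty}_0)\subset\mathcal{A}$. Granting $(1)$ (equivalently $(3)$), the operator $T_g\colon H^{\infty}_0\to H^{\infty}_0$ is compact and carries the closed, norm-isometric, $T_g$-invariant subspace $\mathcal{A}$ into itself; since the restriction of a compact operator to a closed invariant subspace is again compact, $(2)$ follows.

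The main obstacle is the reverse implication $(2)\Rightarrow(1)$. Compactness of $T_g\colon\mathcal{A}\to\mathcal{A}$ gives boundedness there, hence by \cite[Proposition~2.17]{AJS} one obtains $g\in\mathcal{A}$ and $T_g\colon H^{\infty}_0\to H^{\infty}_0$ bounded; the difficulty is upgrading this to compactness on the larger space $H^{\infty}_0$. My plan is to argue by contraposition: if $(3)$ fails then, by the already-established $(1)\Leftrightarrow(3)$, $T_g$ is not compact on $H^{\infty}_0$, and the converse part of the proof of Theorem~\ref{th4} produces a bounded sequence $\{s_n\}\subset H^{\infty}_0$ tending to $0$ uniformly on compact subsets with $\liminf_n\|T_gs_n\|_{H^{\infty}_0}>0$. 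To contradict $(2)$ I would apply the sequential criterion of Lemma~\ref{le3} on $\mathcal{A}$, whose forward direction holds verbatim because norm convergence in $\mathcal{A}$ implies uniform convergence on compacta and $T_g$ preserves the latter; this forces one to check that the test functions actually belong to $\mathcal{A}$.

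This last verification is the delicate point. Writing $s_n=(h_n\circ\psi^{-1}_{\eta,\theta_n})\,z^{k_n}$ with the notation of the proofs of Theorem~\ref{th1} and Theorem~\ref{th4}, I would use that each sector $\Omega^1_{\eta,\theta_n}$ is a Jordan domain, so that by Carath\'eodory's theorem $\psi^{-1}_{\eta,\theta_n}$ extends to a homeomorphism of the closed disk; it then remains to verify that $h_n=e^{-i(u_n+i\widetilde{u}_n)}$ extends continuously to $\partial\Omega^1_{\eta,\theta_n}$, that is, that the harmonic function and its bounded conjugate supplied by Lemma~\ref{le1} are boundary-continuous. I expect the continuity at the vertex of the sector to be the sticking point. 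Should this boundary regularity be awkward to establish directly, the cleanest alternative is to invoke the compactness counterpart of \cite[Proposition~2.17]{AJS}, which would give $(1)\Leftrightarrow(2)$ at once and bypass the test-function construction entirely.
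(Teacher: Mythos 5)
Your treatment of $(1)\Leftrightarrow(3)$ is exactly the paper's: specialize Theorem~\ref{th4} to $\alpha=\beta=0$ and use the monotonicity of $t_1\mapsto\int_{t_2}^{t_1}|g'(re^{i\theta})|\,dr$ to turn the $\limsup$ into a limit and interchange it with the supremum over $\theta$, just as in the passage from Theorem~\ref{th1} to Corollary~\ref{cor1}. For the remaining equivalence the paper does none of the work you describe: it disposes of $(1)\Leftrightarrow(2)$ in one line by citing \cite[Theorem~1.7]{CPPR}. Your ``cleanest alternative'' is therefore essentially the paper's actual proof, except that you attribute the needed result to a ``compactness counterpart of \cite[Proposition~2.17]{AJS}'', which you have not verified exists; the correct source is \cite{CPPR}. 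Your direct argument for $(1)\Rightarrow(2)$ --- using $(3)$ to show $g\in\mathcal{A}$ and $T_g(H^{\infty}_0)\subset\mathcal{A}$, then restricting the compact operator to the closed invariant subspace $\mathcal{A}$ --- is correct and is actually more self-contained than the paper on that half.

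The genuine gap is $(2)\Rightarrow(1)$. Your contrapositive route requires the test functions $s_n=(h_n\circ\psi^{-1}_{\eta,\theta_n})z^{k_n}$ from the proof of Theorem~\ref{th4} to lie in $\mathcal{A}$, and you have correctly identified that this is not available: Lemma~\ref{le1} only supplies a harmonic $u_n$ with \emph{bounded} conjugate $\widetilde{u}_n$ on the sector, with no boundary continuity asserted for either, so $h_n\circ\psi^{-1}_{\eta,\theta_n}$ is a bounded analytic function with no reason to extend continuously to $\overline{\mathbb{D}}$ (the vertex of the sector, which $\psi_{\eta,\theta_n}$ sends to $e^{i\theta_n}$, is precisely where the construction concentrates). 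Moreover, even if one had $s_n\in\mathcal{A}$, Lemma~\ref{le3} is stated and proved only for $T_g$ acting between the weighted spaces $H^{\infty}_\alpha$, so its use as a compactness criterion on $\mathcal{A}$ would itself need justification (the forward direction you invoke is easy, but it should be said). As written, then, your proof of $(2)\Rightarrow(1)$ rests entirely on the unverified citation; replacing it with \cite[Theorem~1.7]{CPPR} closes the argument and recovers the paper's proof.
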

\begin{proof}
By Theorem~\ref{th4},  $T_g: H^{\infty}_0\rightarrow H^{\infty}_0$ is compact if and only if
$$\lim_{t_2\rightarrow1^-}\limsup_{t_1\rightarrow1^-}\sup_{0\leq\theta<2\pi}\int_{t_2}^{t_1}|g'(re^{i\theta})|dr=0\,.$$
Since $\int_{t_2}^{t_1}|g'(re^{i\theta})|dr$ is increasing as $t_1\rightarrow1^{-}$, it follows that
\begin{equation}\begin{split}\nonumber
\lim_{t_2\rightarrow1^-}\limsup_{t_1\rightarrow1^-}\sup_{0\leq\theta<2\pi}\int_{t_2}^{t_1}|g'(re^{i\theta})|dr&=\lim_{t_2\rightarrow1^-}\sup_{0\leq\theta<2\pi}\limsup_{t_1\rightarrow1^-}\int_{t_2}^{t_1}|g'(re^{i\theta})|dr\\
&=\lim_{t\rightarrow1^-}\sup_{0\leq\theta<2\pi}\int_t^1|g'(re^{i\theta})|dr.
\end{split}\end{equation}

The equivalence of (1) and (2) follows from \cite[Theorem~1.7]{CPPR}.
\end{proof}

Now for the compactness of the companion operators $S_g: H^{\infty}_\alpha\rightarrow H^{\infty}_\beta$, we have the following complete characterization for the general symbol $g\in H(\mathbb{D})$\,.
\begin{theorem}\label{th5}
If $g\in H(\mathbb{D})$ and $0\leq\alpha,\beta$, then the following statements hold:

(1) $S_g: H^{\infty}_\alpha\rightarrow H^{\infty}_0$ is compact if and only if $g=0$\,;

(2) If $0<\beta$, then $S_g: H^{\infty}_\alpha\rightarrow H^{\infty}_\beta$ is compact if and only if
$$\lim_{|z|\rightarrow1^-}(1-|z|^2)^{\beta-\alpha}|g(z)|=0\,.$$
\end{theorem}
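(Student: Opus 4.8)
The plan is to treat both parts with a single family of test functions localized near the boundary, exploiting that $(S_gf)'=f'g$, so that $S_g$ detects $g$ through the weighted derivative point evaluations of its image. First I would record the $S_g$-analogue of Lemma~\ref{le3}: its proof is verbatim the one given there (replace $T_g$ by $S_g$, and note that $f_n\to0$ on compacta forces $f_n'\to0$ on compacta by Cauchy's estimates, whence $S_gf_n\to0$ on compacta). So in each direction it suffices to test $S_g$ against bounded sequences tending to $0$ uniformly on compacta. For $a\in\mathbb{D}$ I would take
\[ f_a(w)=\frac{(1-|a|^2)^{\alpha+1}(a-w)}{(1-\overline{a}w)^{2\alpha+2}}. \]
Using $|a-w|\le|1-\overline{a}w|$ and $(1-|w|^2)(1-|a|^2)\le|1-\overline{a}w|^2$, one checks $\|f_a\|_{H^{\infty}_\alpha}\le 2$ for every $\alpha\ge0$, that $f_a\to0$ on compacta as $|a|\to1$, and that $f_a'(a)=-(1-|a|^2)^{-\alpha-1}$. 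This last identity is the engine: it produces the sharp derivative blow-up while keeping the $H^{\infty}_\alpha$-norm bounded, uniformly in $\alpha\ge0$ (in particular at $\alpha=0$, where the usual reproducing kernels collapse to constants).

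For part (2), since $\beta>0$ the weight is normal and $H^{\infty}_\beta=\mathcal{B}^{\infty}_{\beta+1}$; as $(S_gf)(0)=0$ this gives $\|S_gf\|_{H^{\infty}_\beta}\asymp\sup_{z}(1-|z|^2)^{\beta+1}|f'(z)||g(z)|$. For sufficiency I would combine this with the standard estimate $|f'(z)|\le C\|f\|_{H^{\infty}_\alpha}(1-|z|^2)^{-\alpha-1}$, so the quantity is dominated by $C\,(1-|z|^2)^{\beta-\alpha}|g(z)|$; splitting the supremum at a radius beyond which this is $<\varepsilon$ and using $f_n'\to0$ on the complementary compact disk yields $\|S_gf_n\|_{H^{\infty}_\beta}\to0$, hence compactness (the hypothesis also forces $\sup_z(1-|z|^2)^{\beta-\alpha}|g(z)|<\infty$, so $S_g$ is bounded, cf.\ Theorem~\ref{th3}). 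For necessity I would evaluate the weighted derivative at $z_n$ with $|z_n|\to1$: testing against $f_{z_n}$ gives $\|S_gf_{z_n}\|_{H^{\infty}_\beta}\ge c\,(1-|z_n|^2)^{\beta+1}|f_{z_n}'(z_n)||g(z_n)|=c\,(1-|z_n|^2)^{\beta-\alpha}|g(z_n)|$, and compactness forces the left side to $0$; since $\{z_n\}$ is arbitrary this is exactly $\lim_{|z|\to1^-}(1-|z|^2)^{\beta-\alpha}|g(z)|=0$. Equivalently, one may observe that on $\mathcal{B}^{\infty}_{\beta+1}$ the operator is $M_gD$ and quote the compactness half of \cite[Theorem~7]{MZ} with $\varphi(z)\equiv z$, exactly paralleling Theorem~\ref{th3}.

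Part (1) is where the target $H^{\infty}_0$ is genuinely different: the identity $H^{\infty}_\beta=\mathcal{B}^{\infty}_{\beta+1}$ fails at $\beta=0$, so I would replace it by the Schwarz--Pick (Bloch) inequality $(1-|z|^2)|h'(z)|\le\|h\|_{H^{\infty}_0}$, valid for every bounded analytic $h$. The ``if'' part is trivial. For ``only if'', assuming $S_g$ compact and choosing any $z_n$ with $|z_n|\to1$, the family $f_{z_n}$ is bounded in $H^{\infty}_\alpha$ and tends to $0$ on compacta, so $\|S_gf_{z_n}\|_{H^{\infty}_0}\to0$; but
\[ \|S_gf_{z_n}\|_{H^{\infty}_0}\ge(1-|z_n|^2)\big|(S_gf_{z_n})'(z_n)\big|=(1-|z_n|^2)\,|f_{z_n}'(z_n)|\,|g(z_n)|=(1-|z_n|^2)^{-\alpha}|g(z_n)|. \]
Hence $\lim_{|z|\to1^-}(1-|z|^2)^{-\alpha}|g(z)|=0$, which for every $\alpha\ge0$ forces $|g(z)|\to0$ as $|z|\to1$; by the monotonicity of the maximum modulus this yields $g\equiv0$.

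The main obstacle, and the only genuinely delicate point, is producing a single test family that survives the endpoint $\alpha=0$: the natural extremizers for derivative point evaluation in $H^{\infty}_\alpha$ are reproducing-kernel powers, but at $\alpha=0$ these degenerate to constants, while bare Blaschke factors realize the extremal derivative yet fail to tend to $0$ on compacta. Multiplying one Blaschke factor by a normalized kernel power, as in the definition of $f_a$ above, repairs both defects at once and is precisely what makes the estimates uniform in $\alpha\ge0$. The remaining ingredients---the Schwarz--Pick inequality at $\beta=0$, the equivalence $H^{\infty}_\beta=\mathcal{B}^{\infty}_{\beta+1}$ for $\beta>0$, and the passage from boundary decay of $|g|$ to $g\equiv0$---are standard.
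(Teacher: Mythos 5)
Your proof is correct, but it follows a genuinely different route from the paper's. The paper disposes of both parts by citation: for (1) it observes that the inclusion $H^{\infty}_0\subset H^{\infty}_\alpha$ is bounded, so compactness of $S_g:H^{\infty}_\alpha\rightarrow H^{\infty}_0$ forces compactness of $S_g:H^{\infty}_0\rightarrow H^{\infty}_0$, which by Anderson--Jovovic--Smith \cite{AJS} happens only for $g=0$; for (2) it identifies $S_g$ on $\mathcal{B}^{\infty}_{\beta+1}=H^{\infty}_\beta$ with $M_gD$ and quotes \cite[Theorem~8]{MZ}, exactly parallel to its Theorem~\ref{th3}. You instead give a self-contained argument built on the single test family $f_a(w)=(1-|a|^2)^{\alpha+1}(a-w)(1-\overline{a}w)^{-2\alpha-2}$; your norm bound, the decay on compacta, and the identity $f_a'(a)=-(1-|a|^2)^{-\alpha-1}$ all check out, and the Blaschke-factor-times-kernel-power device is indeed what makes the construction survive $\alpha=0$, where the paper's route has nothing to offer directly and must fall back on \cite{AJS}. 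The trade-off is clear: the paper's proof is two lines but opaque, importing both external results wholesale; yours is longer but reproves the $H^{\infty}_0$ endpoint of \cite{AJS} along the way, exhibits explicitly which functions witness non-compactness, and handles sufficiency in (2) by the same splitting scheme the paper already uses for $T_g$ in Theorem~\ref{th4}, so it is more uniform with the rest of the paper. The small points you should make explicit in a final write-up are the ones you already flag in passing: the $S_g$-analogue of Lemma~\ref{le3} (via $f_n\to0$ on compacta $\Rightarrow f_n'\to0$ on compacta), the boundedness of $S_g$ needed to invoke that lemma in the sufficiency direction of (2) (supplied by Theorem~\ref{th3}), and the two-sided norm comparison $\|h\|_{H^{\infty}_\beta}\asymp\sup_z(1-|z|^2)^{\beta+1}|h'(z)|$ for $h(0)=0$, $\beta>0$, of which necessity uses only the elementary Cauchy-estimate direction.
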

\begin{proof}
(1) Anderson et. al. \cite{AJS} had proved that $S_g: H^{\infty}_0\rightarrow H^{\infty}_0$ is compact if and only if $g=0$\,. Since $H^{\infty}_0\subset H^{\infty}_\alpha$ for any $0\leq\alpha$, it follows that $S_g: H^{\infty}_\alpha\rightarrow H^{\infty}_0$ is compact if and only if $g=0$\,.

(2) If $0<\beta$, then $H^{\infty}_\beta=\mathcal{B}^{\infty}_{\beta+1}$\,. Thus the compactness of $S_g: H^{\infty}_\alpha\rightarrow H^{\infty}_\beta$ is equivalent to the compactness of $M_gD: H^{\infty}_\alpha\rightarrow H^{\infty}_{\beta+1}$, where $M_g$ is the multiplication operator and $D$ is the differentiation operator, respectively. Now by choosing $\varphi(z)\equiv z$ in \cite[Theorem~8]{MZ}, the compactness of $M_gD: H^{\infty}_\alpha\rightarrow H^{\infty}_{\beta+1}$ is equivalent to the following condition
$$\lim_{|z|\rightarrow1^-}(1-|z|^2)^{\beta-\alpha}|g(z)|=0\,.$$
\end{proof}

\end{document}